\newtheorem{Teo}{Theorem}[section]
\newtheorem{Prop}[Teo]{Proposition}
\newtheorem{Lema}[Teo]{Lemma}
\theoremstyle{definition}
\newtheorem{Def}[Teo]{Definition}
\newtheorem{Obs}[Teo]{Remark}
\newcommand{\N}{\mathbb{N}}
\newcommand{\lra}{\longrightarrow}
\newcommand{\VR}{\mathcal{O}}
\newcommand{\rk}{\mbox{\rm rk}}
\newcommand{\vdeg}{\mbox{\rm vdeg}}
\begin{document}
\title[Valuation ring extensions]{Valuation rings in simple algebraic extensions of valued fields}

\author{Josnei Novacoski}
\address{Departamento de Matem\'{a}tica,  Universidade Federal de S\~ao Carlos, Rod. Washington Luís, 235, 13565--905, S\~ao Carlos -SP, Brazil}
\email{josnei@ufscar.br}
\author{Mark Spivakovsky}
\address{CNRS UMR 5219, Institut de Mathématiques de Toulouse, 118, rte de Narbonne, 31062 Toulouse cedex 9, France and 
Instituto de Matem\'aticas (Cuernavaca) LaSol, UMI CNRS 2001, UNAM, Av. Universidad s/n. Col. Lomas de Chamilpa, 62210, Cuernavaca, Morelos, M\'exico}
\email{mark.spivakovsky@math.univ-toulouse.fr}

\thanks{During the realization of this project the first author was supported by a grant from Funda\c{c}\~ao de Amparo \`a Pesquisa do Estado de S\~ao Paulo (process numbers 2017/17835-9 and 2019/21181-0) and a grant from Conselho Nacional de Desenvolvimento Cient\'ifico e Tecnol\'ogico (process number 303215/2022-4).}
\keywords{Key polynomials, K\"ahler differentials, the defect}
\subjclass[2010]{Primary 13A18}

\begin{abstract}
Consider a simple algebraic valued field extension $(L/K,v)$ and denote by $\VR_L$ and $\VR_K$ the corresponding valuation rings. The main goal of this paper is to present, under certain assumptions, a description of $\VR_L$ in terms of generators and relations over $\VR_K$. The main tool used here are complete sequences of key polynomials. It is known that if the ramification index of $(L/K,v)$ is one, then every complete set gives rise to a set of generators of $\VR_L$ over $\VR_K$. We show that we can find a sequence of key polynomials for $(L/K,v)$ which satisfies good properties (called neat). Then we present explicit ``neat" relations that generate all the relations between the corresponding generators of $\VR_L$ over
$\VR_K$.
\end{abstract}

\maketitle
\section{Introduction}

Consider a simple algebraic valued field extension $(L/K,v)$ and denote by $\VR_L$ and $\VR_K$ the corresponding valuation rings. The main goal of this paper is to present, under certain assumptions, a description of $\VR_L$ in terms of generators and relations over $\VR_K$. \emph{Key polynomials} are very useful tools for this purpose.

Fix a generator $\eta$ of $L$ over $K$ and take a \emph{complete sequence of key polynomials} $\textbf{Q}=\{Q_i\}_{i\in I}$ for the valuation $\nu$ on $K[x]$ induced by $v$ and $\eta$:
\[
\nu(f):=v(f(\eta))\mbox{ for every }f\in K[x].
\]
In particular, $I$ has a largest element $i_{\max}$ and $g:=Q_{i_{\max}}$ is the monic minimal polynomial of $\eta$ over $K$. We set $I^*:=I\setminus\{i_{\rm max}\}$.

We will assume throughout this paper that
\begin{equation}
e(L/K,v):=[vL:vK]=1.\label{eq:unramified}
\end{equation}
Under this assumption, every \emph{complete set} for $\nu$ naturally gives rise to a set of generators for the extension
$\VR_L/\VR_K$ (see Section \ref{completesets}). In order to describe $\VR_L$ as an $\VR_K$-algebra, we need to write down a full set of relations between these generators.  The idea in this paper is to present a complete sequence of key polynomials in a way that the corresponding relations are as simple as possible.

The applications we have in mind are the study of \emph{graded algebras} associated to valuations and the computation of the module of \emph{K\"ahler differentials} of the extension $\VR_L/\VR_K$ (the latter task will be accomplished in a forthcoming paper). It is well known that complete sets are closely related to  sets of generators of the corresponding graded algebras. We hope that the results of this paper will help to provide simpler presentations for graded algebras. This is particularly important in B. Teissier's program for solving the local uniformization problem in positive characteristic (see \cite{Tei} and \cite{Tei2}).

Another particular interest is to give an explicit description of the module of K\"ahler differentials for the extension
$\VR_L/\VR_K$. This problem has been extensively  studied recently (see \cite{C}, \cite{CK}, \cite{CKR} and \cite{NovSpiAnn}). In order to present this module, one needs to have a description of $\VR_L$ in terms of generators and relations over $\VR_K$. This is precisely the goal of this paper.

Because of the assumption \eqref{eq:unramified}, multiplying $\eta$ by a suitable element of $K$, we may assume that $v\eta=\nu x=0$; we will make this assumption from now on.

For each $i\in I^*$ we denote by $\nu_i$ the truncation of $\nu$ at $Q_i$ (see Section \ref{completesets}). For each $n\in\N$ we denote by $\textbf{Q}_n$ the set of elements of $\textbf{Q}$ of degree $n$. If $\textbf{Q}_n$ does not have a largest element (with respect to the pre-ordering induced by $\nu$), then we say that $\textbf{Q}_n$ is an infinite \textbf{plateau} (of key polynomials). For a strictly positive natural number $n$ such that $\textbf{Q}_n\ne\emptyset$, put
\[
n_+=\min\left\{n'\in\N\ \left|\ n'>n,\textbf{Q}_{n'}\ne\emptyset\right.\right\}.
\]
\begin{Obs}
With the above notation, if for each $n$ satisfying $\textbf Q_n\neq \emptyset$ we take a cofinal family $\textbf Q'_n$ of $\textbf Q_n$, then $\textbf{Q}'=\bigcup\textbf{Q}'_{n}$ is also a complete sequence of key polynomials for $\nu$. Hence, if $\textbf{Q}_n$ has a largest element, then we will assume that $\textbf{Q}_n$ is a singleton.
\end{Obs}

\begin{Obs}Assume that every infinite plateau $\textbf{Q}_n$ contains a countable final segment (this is automatically satisfied, for instance, if the rank of $v$ is countable). Then $\textbf{Q}_n$ also contains a cofinal subset of order type $\N$. Replacing each $\textbf{Q}_n$ by this cofinal subset, we may assume that every infinite plateau has order type $\N$.

For simplicity of exposition, we will make the above assumption from now till the end of the paper. More generally, the results of this paper hold (with almost identical proofs) if we assumed that each infinite plateau $\textbf{Q}_{n_i}$ contains a final segment $\textbf{Q}^\dag_{n_i}$ such that all the $\textbf{Q}^\dag_{n_i}$ have the same cardinality (in which case, after passing to suitable cofinal subsets, we may assume that all the infinite plateaus have the same order type).
\end{Obs}
Since for every key polynomial $Q$ we have $\deg_xQ\le\deg_xg$, the set of non-empty plateaus is finite. Let us decompose $\bf Q$ as a disjoint union
\[
{\bf Q}=\coprod\limits_{q=1}^{w+1}{\bf Q}_{n_q},
\]
where $1=n_1<n_2<\dots<n_{w+1}:=\deg_xg$. This induces a decomposition of the set $I$ as a disjoint union $I=\coprod\limits_{q=1}^{{w+1}}I^{(q)}$, where the $I^{(q)}$ are segments in $I^*$ with
\[
I^{(1)}<I^{(2)}<\dots<I^{(w)}<I^{(w+1)}=\{i_{\text{max}}\}
\]
and each $I^{(q)}$ is either a singleton or is order-isomorphic to the set $\N$ of natural numbers. Namely, we put $I^{(q)}=\left\{i\in I\ \left|\ \deg_xQ_i=n_q\right.\right\}$, $q\in\{1,\dots,w+1\}$. For each $q$ we denote by $\ell_q$ the smallest element of $I^{(q)}$.

For each $i\in I^*$, choose $a_i\in K$ such that $\nu(Q_i)=v(a_i)$ and set $\tilde{Q}_i:=Q_i/a_i$ (so that
$\nu\left(\tilde{Q}_i\right)=0$; in particular, we have $\tilde Q_{\ell_1}=x$). Put $\tilde
Q_{i_{\text{max}}}=Q_{i_{\text{max}}}=g$. Set $\tilde{\textbf{Q}}=\{\tilde Q_i\}_{i\in I}$. We will denote by $\N^I$ the set of all maps $\lambda: I\lra \N$ such that $\lambda(i)\neq 0$ for only finitely many $i\in I$. For each $\lambda\in \N^I$ we set
\[
\tilde{\textbf Q}^\lambda:=\prod_{i\in I} \tilde Q_i^{\lambda(i)}\in K[x]\mbox{ and }\textbf Q^\lambda:=\prod_{i\in I} Q_i^{\lambda(i)}\in K[x].
\]

Consider a polynomial $f\in K[x]\setminus\{0\}$. For $i\in I^*$ we will consider expressions of the form
\begin{equation}\label{expiadicf}
f=\sum_{j=1}^rb_j{\bf\tilde Q}^{\lambda_j}\mbox{ with }b_j\in K, \lambda_j\in \N^I\mbox{ and }\lambda_j\left(k\right)=0\mbox{ if }k>i.
\end{equation}
For an index $k\in I^*$, we say that $\tilde Q_k$ {\bf appears in \eqref{expiadicf}} if $\lambda_j\left(k\right)>0$ for some $j\in\{1,\dots,r\}$. A \textbf{full $i$-th expansion of $f$} is an expression \eqref{expiadicf} having the following properties:
\begin{enumerate}
\item $\displaystyle\nu_i(f)=\min_{0\leq j\leq r}\{v(b_j)\}$
\item if $k<i$ and $j\in\{1,\dots,r\}$, then $\lambda_j\left(k\right)<\left(\deg_xQ_k\right)_+$
\item for each $n\in\N_{>0}$ we have
\[
\#\{k\in I^*\ |\ \deg_xQ_k=n\mbox{ and }\tilde Q_k\mbox{ appears in \eqref{expiadicf}}\}\le1.
\]   
\end{enumerate}
It is easy to show that full $i$-th expansions exist (see Section \ref{Characteri}). Also, it is easy to see that such expansions might not be unique. 

In order to simplify the notation and make our constructions more rigid, we will require these full $i$-th expansions to satisfy certain additional conditions that will be spelled out below. For $\ell\in I$ we denote by $q_\ell$ the unique element of
$\{1,\dots,w\}$ such that $\ell\in I^{(q_\ell)}$.

For $i,\ell\in I$ we write $i\ll \ell$ if $q_i<q_\ell$. Also, we write $Q_i<_{\rm succ}Q_\ell$ if
\[
\text{either}\ \ell=i+1\mbox{ or }(\#I^{(q_i)}=\infty\text{ and }q_\ell=q_i+1).
\]
Consider elements $i,\ell\in I$ such that
\begin{equation}
Q_i<_{\rm succ}Q_\ell.\label{eq:successor}
\end{equation}
The {\bf level} of the pair $(\ell,i)$ is the integer
\begin{displaymath}
s(\ell,i):=\left\{
\begin{array}{ll}
i-\ell_{q_i}&\text{\,if }\ell=i+1\\
\max\{i-\ell_{q_i},\ell-\ell_{q_\ell}\}&\text{\,otherwise}.
\end{array}
\right.
\end{displaymath}
We say that the pair $(\ell,i)$ is {\bf neat} if the following implication holds. If $q_i\neq q_\ell$ and
$\#I^{(q_i)}=\#I^{(q_\ell)}=\infty$ then $i-\ell_{q_i}=\ell-\ell_{q_\ell}$.

Fix a polynomial $f\in K[x]\setminus\{0\}$ and consider a full $i$-th expansion \eqref{expiadicf} of $f$. Recall that by definition of full $i$-th expansions, for each $q\in\{1,\dots,w\}$ there exists at most one $i\in I^{(q)}$ such that $\tilde Q_i$ appears in \eqref{expiadicf}. If such an $i$ does exist, we will denote it by $j(f,q)$. If there is no such $i$, we will write $j(f,q)=-\infty$; we adopt the convention that $-\infty<I$.

We say that the full $i$-th expansion \eqref{expiadicf} is {\bf neat} if there exists $s\in\N$ satisfying
\[
j(f,q)-\ell_q=s\text{ for all }q\in\{1,\dots,w\}\text{ such that }\#I^{(q)}=\infty\text{ and }j(f,q)>-\infty
\]
 (recall that we have $j(f,q)-\ell_q=0$ whenever $\#I^{(q)}=1$ and $j(f,q)>-\infty$). In this situation, we will say that \eqref{expiadicf} is {\bf neat of level} $s$, provided the set of indices $q$ with $\#I^{(q)}=\infty$ and $j(f,q)>-\infty$ appearing above is not empty. If the set of such $q$ is empty, then we will say that \eqref{expiadicf} is {\bf neat of level} 0.

\begin{Obs} It is possible for \eqref{expiadicf} to be of level 0 even if the set of indices $q$ appearing in the above definition is not empty.
\end{Obs}

The first main result of this paper (Lemma \ref{lemaneat}) is that we can choose $\textbf{Q}$ as above in  such way that for every neat pair $(\ell,i)$, there exists a neat full $i$-th expansion of $\tilde Q_\ell$. More precisely, for neat pairs
$(\ell,i)$, $i,\ell\in I$, we will construct, recursively in the level $s(\ell,i)$, neat full $i$-th expansions
\begin{equation}
\tilde Q_\ell=\sum\limits_ {j=1}^rb_{\ell ij}\tilde{\textbf{Q}}^{\lambda_j}.\label{eq:fulliexpofQtildel}
\end{equation}

For each $i\in I^*$ choose an independent variable $X_i$ and let $\textbf X=\{X_i\}_{i\in I^*}$. By Proposition \ref{GenerationofOLoverOK} below, the natural map $\pi:\VR_K[{\bf X}]\rightarrow L$ defined by $X_i\mapsto \tilde Q_i$ induces an isomorphism
\[
\VR_L\simeq \VR_K[\textbf{X}]/\mathcal I,
\]
where $\mathcal I$ is the kernel of $\pi$.
\begin{Obs}
The complete set $\left(\tilde Q_i\right)_{i\in I}$ for $\nu$ provides (as described in Section \ref{completesets}) a set of generators for $\VR_L$ over $\VR_K$. The ideal $\mathcal I$ above is the ideal generated by all the relations between the $\tilde Q_i$'s. Our goal is to present a simple set of generators of $\mathcal I$.
\end{Obs}
Consider elements $i,\ell\in I$ satisfying \eqref{eq:successor}. Fix an element $b_{\ell i}\in K$ such that
\begin{equation}
v\left(b_{\ell i}\right)=-\nu_i\left(\tilde Q_\ell\right)=-\min\limits_jv\left(b_{\ell ij}\right),\label{eq:minusthevalue}
\end{equation}
where the notation is as in \eqref{eq:fulliexpofQtildel}. Set
\begin{equation}\label{equaimportm}
Q_{\ell i}=b_{\ell i}\sum\limits_jb_{\ell ij}{\textbf{X}}^{\lambda_j}\in \VR_K[\textbf X].
\end{equation}
Let
\begin{equation}
\mathcal I_1=\left(\left.b_{\ell i}X_\ell-Q_{\ell i}\ \right|\  i,\ell\in I^*, (\ell,i)\text{ is neat}\right)\VR_K[\bf X]\label{eq:defI1}
\end{equation}
and
\begin{equation}
\mathcal I_2=\left(\left.Q_{i_{\max}i}\ \right|\ i\in I^*,Q_i<_{\rm succ}Q_{i_{\max}}\right)\VR_K[\bf X].\label{eq:defI2}
\end{equation}

The main result of this paper (Theorem \ref{generatorsCalI}) is that
\[
\mathcal I=\mathcal I_1+\mathcal I_2.
\]
The importance of this result lies in the fact that the relations in \eqref{eq:defI1} and \eqref{eq:defI2} are very well behaved; in particular, they are uniquely determined by the pair $(\ell,i)$. This will be important for the study of the module of the K\"ahler differentials for $\VR_L/\VR_K$ (work in preparation).

Theorem \ref{generatorsCalI} holds under one additional mild technical assumption, namely, that the set $\bf Q$ is \emph{successively strongly monic} (Definition \ref{successivelystronglymonic}). This assumption holds whenever $(K,v)$ is henselian and whenever $\rk\ v=1$.

\section{Some properties of complete sets}\label{completesets}

We keep the notation of the Introduction. Let $q,f\in K[x]$ be two polynomials with $\deg(q)\geq 1$. There exist (uniquely determined) $f_0,\ldots,f_n\in K[x]$ with $\deg(f_i)<\deg(q)$ for every $i$, $0\leq i\leq n$, such that
\[
f=f_0+f_1q+\ldots+f_nq^n.
\]
This expression is called the \textbf{$q$-expansion of $f$}. If $f_n=1$ in the above expansion, then we say that $f$ is \textbf{$q$-monic}. For a monic polynomial $q$ of $\deg(q)\geq 1$, the \textbf{truncation of $\nu$ at $q$} is defined by
\[
\nu_q(f)=\min\left\{\nu\left(f_iq^i\right)\right\},
\]
where $f=f_0+f_1q+\ldots+f_nq^n$ is the $q$-expansion of $f$. A set $\textbf{Q}\subseteq K[x]\setminus K$ is called a {\bf complete} set for $\nu$ if for every $f\in K[x]$ there exists
$q\in\textbf{Q}$ such that
\begin{equation}\label{eqabotdegpol}
\deg(q)\leq\deg(f)\mbox{ and }\nu(f)=\nu_q(f).
\end{equation}

\begin{Prop}\label{GenerationofOLoverOK}\cite[Proposition 3.5]{NovSpiAnn}
Let $L=K(\eta)$ and take a valuation $v$ on $L$ such that $e(L/K,v)=1$. Consider the valuation $\nu$ on $K[x]$ defined by $v$ and $\eta$ and fix a complete set $\textbf{Q}=\{Q_i\}_{i\in I}$ for $\nu$. For every $i\in I^*$ choose an $a_i\in K$ such that $\nu(Q_i)=v(a_i)$ and set $\tilde Q_i=Q_i/a_i$. Then
\[
\VR_L=\VR_K\left[\left.\tilde Q_i(\eta)\ \right|\ i\in I^*\right].
\]
\end{Prop}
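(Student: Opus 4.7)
The plan is to prove the two inclusions separately. The inclusion $\VR_K[\tilde Q_i(\eta)\mid i\in I^*]\subseteq \VR_L$ is immediate from the normalization: by the choice of $a_i$, we have $v(\tilde Q_i(\eta))=\nu(Q_i)-v(a_i)=0$, so each generator sits in $\VR_L$ and hence so does the $\VR_K$-algebra they generate.

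For the nontrivial direction $\VR_L\subseteq \VR_K[\tilde Q_i(\eta)\mid i\in I^*]$, I would argue by strong induction on $\deg f$, where $f\in K[x]$ represents a given $\alpha=f(\eta)\in \VR_L$, i.e., $\nu(f)\geq 0$. The base case $\deg f=0$ is trivial, since then $f\in K$ with $v(f)\geq 0$, so $f\in \VR_K$.

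For the inductive step, use completeness of $\textbf{Q}$ to produce $Q_i\in \textbf{Q}$ with $\deg Q_i\leq \deg f$ and $\nu(f)=\nu_{Q_i}(f)$. Writing the $Q_i$-expansion $f=\sum_{j=0}^n f_jQ_i^j$ with $\deg f_j<\deg Q_i$ and substituting $Q_i=a_i\tilde Q_i$ yields
\[
f=\sum_{j=0}^n (a_i^j f_j)\,\tilde Q_i^j.
\]
By definition of truncation, every term satisfies $\nu(f_jQ_i^j)\geq \nu_{Q_i}(f)=\nu(f)\geq 0$; since $\nu(\tilde Q_i)=0$ this gives $\nu(a_i^j f_j)\geq 0$. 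Moreover $\deg(a_i^j f_j)=\deg f_j<\deg Q_i\leq \deg f$, so the inductive hypothesis applies to each $a_i^j f_j$, yielding $(a_i^j f_j)(\eta)\in \VR_K[\tilde Q_k(\eta)\mid k\in I^*]$. Summing, $f(\eta)$ lies in the same algebra, completing the induction.

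The hypothesis $e(L/K,v)=1$ enters only to guarantee that the normalizing elements $a_i$ can be chosen in $K$, since $\nu(Q_i)=v(Q_i(\eta))\in vL=vK$ precisely when $e=1$. I do not foresee a serious difficulty; the only step requiring slight care is checking that the induction is well-founded, i.e., that $\deg f_j<\deg f$ strictly, which follows at once from $\deg f_j<\deg Q_i\leq \deg f$ (both when $\deg Q_i<\deg f$ and when $\deg Q_i=\deg f$).
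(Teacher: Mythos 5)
Your argument is the natural one and is essentially correct: normalize so the generators have value zero, then go the other way by strong induction on the degree of a polynomial representative, using completeness to pick a $Q_i$ whose truncation computes $\nu(f)$, $Q_i$-expand, absorb the $a_i^j$ into the coefficients, and recurse. The role you assign to $e(L/K,v)=1$ (ensuring $\nu(Q_i)\in vK$ so $a_i$ exists) is exactly right, and the degree drop $\deg f_j<\deg Q_i\leq\deg f$ makes the recursion well-founded.

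One small gap: completeness only yields some $Q_i\in\mathbf Q$ with $\deg Q_i\le\deg f$, and nothing in your setup rules out $Q_i=Q_{i_{\max}}=g$, for which $i\notin I^*$, $a_i$ is not defined, and $\tilde Q_i$ is not among your generators. This can happen if $\deg f\ge\deg g$. The fix is cheap and should be stated: either observe that $g(\eta)=0$, so the $g$-expansion $f=\sum_j f_j g^j$ gives $f(\eta)=f_0(\eta)$ with $\nu(f_0)=\nu_g(f)=\nu(f)\ge 0$ and $\deg f_0<\deg g\le\deg f$, and recurse on $f_0$; or, more simply, reduce at the outset to a representative $f$ with $\deg f<\deg g$ (always possible since $g(\eta)=0$), after which completeness automatically produces $Q_i$ with $\deg Q_i<\deg g$, hence $i\in I^*$, and the coefficients $f_j$ stay in the regime $\deg<\deg g$ throughout the induction. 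With that clause added, the proof is complete and matches the expected line of argument (the paper itself only cites \cite[Proposition 3.5]{NovSpiAnn} for this fact).
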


\subsection{Key polynomials}

Let $\overline K$ denote the algebraic closure of $K$. Fix an extension $\overline \nu$ of $\nu$ to $\overline K[x]$. For each $f\in K[x]\setminus K$ we define
\[
\epsilon(f):=\max\{\overline \nu(x-a)\mid a\mbox{ is a root of }f\}.
\]
A monic polynomial $Q\in K[x]\setminus K$ is called \textbf{a key polynomial for} $\nu$ if $\epsilon(f)<\epsilon(Q)$ for every $f\in K[x]\setminus K$ with $\deg(f)<\deg(Q)$.

From now on we will fix a complete sequence of key polynomials $\textbf{Q}=\{Q_i\}_{i\in I}$ for $\nu$. This means that every element $Q_i$ is a key polynomial for $\nu$, the set $I$ is well ordered, the map $i\mapsto \epsilon(Q_i)$ is strictly increasing and the set $\{Q_i\}_{i\in I}$ is a complete set for $\nu$. In particular, $g$ is the last element of $\textbf{Q}$ and
\begin{equation}
\forall Q_i\in{\bf Q}\setminus\{g\}\mbox{ we have }\nu(Q_i)\in vL.\label{eq:nuQi<infty}
\end{equation}
The existence of such sequences of key polynomials is proved, for example, in \cite{Novspivkeypol}.

\section{Full $i$-th expansions}\label{Characteri}

We keep the notation of the previous sections.
\begin{Def} Given two valuations $\mu,\mu'$ of $K[x]$ extending $v|_K$, we write $\mu\le\mu'$ if
\begin{equation}
\mu(f)\le\mu'(f)\label{eq:m'lemu}
\end{equation}
 for all $f\in K[x]$. We say that $\mu<\mu'$ if, in addition, the inequality \eqref{eq:m'lemu} is strict for some $f\in K[x]$.
\end{Def}
The following fact is well known and easy to prove (for instance, see \cite[Corollary 3.13]{Nov11}). Given two indices $i,i'\in I^*$ with $i<i'$, we have
\begin{equation}
\nu_i<\nu_{i'}<\nu.\label{eq:nui<nui'}
\end{equation}

We will proceed to prove that for every $i\in I^*$ and $f\in K[x]$, there exists at least one full $i$-th expansion of $f$. Let
\[
f=f_{i0}+f_{i1}\tilde Q_i+\ldots+f_{in}\tilde Q_i^n
\]
be the $\tilde Q_i$-expansion of $f$. For each $f_{ij}\neq 0$ there exists $k\ll i$ such that
\begin{equation}
\nu_k(f_{ij})=\nu(f_{ij}).\label{eq:nukfij}
\end{equation}
Choose $k$ to be any element of $I$ such that $k\ll i$ and the equality \eqref{eq:nukfij} holds for every
$j\in\{0,\dots,n\}$.
%\begin{Obs}
%Since $I$ is well ordered, we could choose $k$ to the smallest satisfying this property.
%\end{Obs}
Let
\[
f_{ij}=f_{jk0}+f_{jk1}\tilde Q_k+\ldots+f_{jks}\tilde Q_k^s
\]
be the $\tilde Q_k$-expansion of $f_{ij}$. We can proceed taking expansions of the ``coefficients". Since at each step the degree of the coefficients of the expansions decreases strictly, we will reach the case where these coefficients belong to $K$. Hence we obtain an expression of the form \eqref{expiadicf}. It is easy to verify that this expression satisfies the conditions (1)--(3) in the definition of full $i$-th expansion. In particular,
\begin{equation}
\nu_i(f)=\min_{1\leq j\leq r}\{v(b_j)\}.\label{eq:nui(f)=minvbj}
\end{equation}

\begin{Obs}\label{propertiesithexpansion} Full $i$-th expansions have the following properties.
\begin{enumerate}
\item Equality $\nu_k(f_{ij})=\nu(f_{ij})$ and its analogues hold at each step of our recursive process.
\item The full $i$-th expansion  is not, in general, unique. However, it is uniquely determined by the tuple
\begin{equation}
\left(k\ \left|\ \tilde Q_k\mbox{ appears in \eqref{expiadicf}}\right.\right),\label{eq:tupleappears}
\end{equation}
where we write the indices $k$ in the tuple \eqref{eq:tupleappears} in the decreasing order.
%\item The set of all full $i$-th expansions of $f$ is well ordered, where the ordering is given by the lexicographical order of %the tuples \eqref{eq:tupleappears}.
%\item The particular full $i$-th expansion constructed above is the minimal element of the set of all full $i$-th expansions. %We will refer to this specific full $i$-th expansion when we say {\bf the full $i$-th expansion of} $f$.
\end{enumerate}
\end{Obs}

\begin{Obs}
By construction of the full $i$-th expansion we have
\[
\deg\left(\frac{\tilde{\textbf{Q}}^{\lambda_j}}{\tilde Q_i^{\lambda_j(Q_i)}}\right)<\deg(Q_i)\mbox{ for every }j,1\leq j\leq r.
\]
\end{Obs}
\medskip

Recall that $\textbf{X}=\{X_i\}_{i\in I^*}$ is a set of indeterminates.
\medskip

\noindent{\bf Notation.}  For $i\in I^*$, denote
\[
\textbf{X}_{<i}=\{X_{i'}\}_{\substack{i'\in I^*\\i'<i}},\ \ \textbf{X}_{\le i}=
\{X_{i'}\}_{\substack{i'\in I^*\\i'\le i}}\mbox{ and }\textbf{X}_{\ge i}=\{X_{i'}\}_{\substack{i'\in I^*\\i'\ge i}}.
\]
Let 
\[
F\in K[\textbf{X}]:=K[X_i\mid i\in I^*],
\]
that is, there exist $\lambda_1,\ldots,\lambda_s\in \N^{I^*}$ and $b_1,\ldots,b_s\in K$ such that
\begin{equation}
F=\sum_{j=1}^sb_j\textbf{X}^{\lambda_j}.\label{eq:fexpansion}
\end{equation}
We write
\begin{equation}
F_{\textbf{Q}}:=F\left(\textbf{Q}\right)=\sum_{j=1}^sb_j\textbf{Q}^{\lambda_j}\mbox{ and
}F_{\tilde{\textbf{Q}}}:=F\left(\tilde{\textbf{Q}}\right)=\sum_{j=1}^sb_j\tilde{\textbf{Q}}^{\lambda_j}.\label{eq:fQexpansion}
\end{equation}

\begin{Obs}\label{eq:decompositionI*andrelations3} Take a $q\in\{1,\dots,w\}$ such that $\#I^{(q)}=\infty$.
\begin{enumerate}
\item Take an element  $i\in I^{(q)}$ and let $\ell=i+1$. Then $\tilde Q_{i+1}$ has the form
\[
b_{i+1,i}\tilde Q_{i+1}=\tilde Q_i+(A_i)_{\tilde{\textbf{Q}}}
\]
where
\begin{equation}
A_i\in\VR_K\left[{\bf X}_{<\ell_q}\right]\label{eq:Rli3}
\end{equation}
and $b_{i+1,i}\in\VR_K$.

Given an expression of the form
\begin{equation}
\sum\limits_{j=1}^rb_j{\bf\tilde Q}^{\lambda_j}\mbox{ with }b_j\in K,\label{eq:arbitraryexpansion1}
\end{equation}
consider the operation of replacing every occurrence of $\tilde Q_i$ in \eqref{eq:arbitraryexpansion1} by
\[
b_{i+1,i}\tilde Q_{i+1}-(A_i)_{\bf{\tilde Q}}.
\]
Every time $\tilde Q_i$ appears in an expansion of a polynomial in $K[x]$, it can be eliminated by the operation described above. The inequalities \eqref{eq:nui<nui'} show that the set $\tilde{\bf Q}\setminus\{Q_i\}$ is still a complete set for $\nu$.
\item  We can repeat the above procedure (replacing $\tilde{\bf Q}$ by $\tilde{\bf Q}\setminus\{Q_i\}$) infinitely many times. In other words, consider sets $\tilde I^{(q)}\subset  I^{(q)}$ such that 
\[
\#\tilde I^{(q)}=\#I^{(q)},\quad q\in\{1,\dots w\},
\]
and put $\tilde I^*=\coprod\limits_{q=1}^w\tilde I^{(q)}$ with the ordering induced from $I^*$. Then $\left\{\tilde
Q_i\right\}_{i\in\tilde I^*}$ is still a complete set for $\nu$.
\end{enumerate}
\end{Obs}

Fix an element $f\in K[x]$ and let
\begin{equation}\label{eppansnfoposjjfh}
f=f_0+f_1\tilde Q_i+\ldots+f_r\tilde Q_i^r
\end{equation}
be the $\tilde Q_i$-expansion of $f$. 
\medskip

\noindent{\bf Notation.} The subset $S_i(f)\subset\{0,\dots,r\}$ is defined by
\[
S_i(f):=\left\{j\in\{0,\dots,r\}\ \left|\ \nu\left(f_j\tilde Q_i^j\right)=\nu_i(f)\right.\right\}.
\]
\begin{Obs} The same set $S_i(f)$ can be defined analogously using the $Q_i$-expan-sion instead of the $\tilde Q_i$-expansion; the result does not depend on which of the two types of expansions we use.
\end{Obs}
Fix indices $i<\ell$ in $I$. The following definition is inspired by the work of Julie Decaup \cite{D}, but is more restrictive than hers.

\begin{Def} We say that $Q_\ell$ is an {\bf immediate successor} of $Q_i$ (denoted by $Q_i<_{\text{imm}}Q_\ell$) if
$\ell=i+1$.
\end{Def}
\begin{Obs} It may happen that $\deg\ Q_{i+1}=\deg\ Q_i$ (if $\#I^{(q_i)}=\infty$).
\end{Obs}
\begin{Def}[J. Decaup \cite{D}] We say that $Q_\ell$ is a {\bf limit successor} of $Q_i$ (denoted by $Q_i<_{\lim}Q_\ell$) if
$\#I^{(q_i)}=\infty$ and $q_\ell=q_i+1$.
\end{Def}
\begin{Obs}
By definition, $Q_i<_{\rm succ}Q_\ell$ if and only if either $Q_i<_{\text{imm}}Q_\ell$ or $Q_i<_{\lim}Q_\ell$.
\end{Obs}
Assume that $Q_i<_{\rm succ}Q_\ell$. Let $Q_\ell=\sum\limits_{j=0}^{r_{\ell i}}q_jQ_i^j$ be the $Q_i$-expansion of $Q_\ell$.
\begin{Def}\label{stronglymonic} We say that $Q_\ell$ is {\bf strongly $Q_i$-monic} if
\[
q_{r_{\ell i}}=1
\mbox{ and } r_{\ell i}\in S_i(Q_\ell).
\]
\end{Def}
\begin{Obs}\begin{enumerate}
\item If $Q_i<_{\text{imm}}Q_\ell$, then  $Q_\ell$ is strongly $Q_i$-monic.
\item The property of $Q_\ell$ being strongly $Q_i$-monic does not depend on the particular  choice of $Q_i$ and $Q_\ell$, only on their respective degrees.
\end{enumerate}
\end{Obs}
\begin{Def}\label{successivelystronglymonic} We say that $\bf Q$ is {\bf successively strongly monic} if for each pair
$(i,\ell)$ as above with $\ell\in I^*$, $Q_\ell$ is strongly $Q_i$-monic.
\end{Def}
From now on, we will assume that $\textbf{Q}$ is successively strongly monic. This condition is automatically satisfied if for instance the rank of $v$ is one or $(K,v)$ is \emph{henselian}. 
\section{Neat polynomials}\label{Immediatesuccessors}

\noindent{\bf Notation.} For $F\in K[\textbf{X}]$, let $\mu_0(F)$ denote the minimum of the values of the coefficients of
$F$:
\begin{equation}\label{eq:mu0=nui}
\mu_0(F)=\min\limits_{j\in\{1,\dots,s\}}\left\{v\left(b_j\right)\right\}
\end{equation}
in the notation of \eqref{eq:fexpansion}.

We will now deduce some properties of $b_{\ell i}$ and $Q_{\ell i}$ as in \eqref{eq:minusthevalue} and \eqref{equaimportm}. If $\ell\ne i_{\max}$, then we will choose $b_{\ell i}$ of the following special form. Since in this case $Q_\ell$ is assumed to be strongly $Q_i$-monic, after renumbering the monomials in \eqref{eq:fulliexpofQtildel} we may assume that
$\tilde{\textbf{Q}}^{\lambda_1}$ is divisible by $\tilde Q_i$ but not by any $Q_{i'}$ with $i'<i$ and that
$vb_{\ell i1}=\nu_i\left(\tilde Q_\ell\right)$. Put $b_{\ell i}:=\frac1{b_{\ell i1}}$.

Note that, in all cases, we have
\begin{equation}
v\left(b_{\ell i}\right)>0\mbox{ whenever }\ell\in I^*.\label{eq:vbli>0}
\end{equation}
\begin{Obs} By definition of $b_{\ell i}$, we have $\mu_0(Q_{\ell i})=0$ (this holds for all the pairs $(\ell,i)$ as above, including when $\ell=i_{\text{max}}$). In particular,
\[
Q_{\ell i}\in\VR_K[\bf X].
\]
\end{Obs} 

Let $(\ell,i)$ be a neat pair and fix a full $i$-th expansion \eqref{eq:fulliexpofQtildel} of $\tilde Q_\ell$. Let
$s:=s(\ell,i)$. Let S$_{\ell,i}$ denote the  statement that  the expansion \eqref{eq:fulliexpofQtildel} is neat of level $s$.

\begin{Def}\label{neat} We say that $\bf{\tilde Q}$ is {\bf neat} if S$_{\ell,i}$ holds for all the neat pairs $(\ell,i)$.
\end{Def}

\begin{Lema}\label{lemaneat} After replacing $I^*$ by a suitable subset $\tilde I^*$, it is possible to choose an expansion \eqref{eq:fulliexpofQtildel} for each neat pair $(\ell,i)$ in such a way that $\bf{\tilde Q}$ becomes neat.
\end{Lema}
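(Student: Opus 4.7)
I would prove the lemma by induction on $s = s(\ell, i)$, simultaneously constructing the refined subset $\tilde I^* \subset I^*$ and the neat expansions. Two tools from Remark \ref{eq:decompositionI*andrelations3} are central: the substitution rule
\[
\tilde Q_i = b_{i+1,i}\tilde Q_{i+1} - (A_i)_{\tilde{\textbf{Q}}}, \qquad A_i \in \VR_K[\textbf{X}_{<\ell_{q_i}}],
\]
which trades $\tilde Q_i$ for $\tilde Q_{i+1}$ modulo terms supported in strictly earlier plateaus; and the freedom from Remark \ref{eq:decompositionI*andrelations3}(2) to replace each infinite plateau $I^{(q)}$ by any equipotent subset $\tilde I^{(q)}$ while preserving the complete-set property. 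Together, these let one align the indices of all $\tilde Q_k$'s appearing in an expansion to a common offset from the start of each plateau.

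For the base case $s=0$, a neat pair $(\ell,i)$ has $i = \ell_{q_i}$ and, in the limit-successor case, also $\ell = \ell_{q_\ell}$. I would run the recursive construction of a full $i$-th expansion of $\tilde Q_\ell$ from Section \ref{Characteri}: at each step one picks some $k \ll i$ with $\nu_k(f_{ij}) = \nu(f_{ij})$ for all relevant coefficients. By \eqref{eq:nui<nui'} this equality persists for every $k' \geq k$ in $I^*$, so $k$ may be enlarged freely within its plateau; after thinning $I^{(q_k)}$ via Remark \ref{eq:decompositionI*andrelations3}(2) so that the chosen $k$ is the smallest element $\tilde\ell_{q_k}$ of the refined plateau, every appearing $\tilde Q_k$ from an infinite plateau satisfies $k = \tilde\ell_{q_k}$, giving a neat expansion of level $0$. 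Only finitely many plateaus contribute to a single expansion (by condition (3)), so the necessary thinnings can be made compatible.

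For the inductive step $s > 0$, assume the conclusion holds for all neat pairs of level $<s$. Take a neat pair $(\ell,i)$ of level $s$ and any full $i$-th expansion of $\tilde Q_\ell$. For each appearing $\tilde Q_k$ in an infinite plateau with $k - \tilde\ell_{q_k} \neq s$, apply the substitution rule above (or its reverse) repeatedly to shift the index to $\tilde\ell_{q_k} + s$; the correction terms $(A_k)_{\tilde{\textbf{Q}}}$ live in strictly earlier plateaus and, by the inductive hypothesis, can be re-expanded neatly of level $s$. Condition (3) bounds each plateau's contribution to a single index, so after finitely many substitutions the process terminates. The main obstacle I anticipate is checking that condition (1) of a full $i$-th expansion, $\nu_i(\tilde Q_\ell) = \min_j v(b_{\ell ij})$, is preserved throughout, and in particular that in the limit-successor case — where $(\ell,i)$ straddles two infinite plateaus and the synchronization $i - \tilde\ell_{q_i} = \ell - \tilde\ell_{q_\ell}$ must be compatible with the forced offset $s$ in the lower plateaus — no unwanted cancellation of the minimum-valuation coefficient occurs. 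The strong $Q_i$-monicity of $Q_\ell$ (Definition \ref{stronglymonic}) should provide the leading-term control needed for this bookkeeping.
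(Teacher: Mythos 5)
Your base case captures the right mechanism — push the chosen index $k$ as far up the plateau as needed (possible by \eqref{eq:nui<nui'}), then thin the plateau via Remark \ref{eq:decompositionI*andrelations3}(2) so that the surviving index sits at the desired offset — and this is indeed the engine of the paper's argument. The paper's proof applies this same mechanism uniformly at every level $s$, coordinating across plateaus by a secondary \emph{downward} induction on the index $j$ of the infinite plateau $r_j$: one establishes a statement C$_{s,j}$ (``for pairs with $i$ in plateau $r_j$ or later, the expansion uses offset $s$ in plateaus $r_j,\dots,r_t$'') starting from the automatic C$_{s,t}$ and, for each $j$, choosing $u\ge s$ large enough and removing precisely $\{\ell_{r_j}+s,\dots,\ell_{r_j}+u-1\}$ from $I^{(r_j)}$. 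The removal starting at offset $s$ (not at $\ell_{r_j}$) is what preserves the already-established C$_{s'}$ for $s'<s$.

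Your inductive step, however, replaces this clean ``choose large, thin down'' device with explicit substitutions $\tilde Q_k \mapsto b_{k+1,k}\tilde Q_{k+1}-(A_k)_{\tilde{\bf Q}}$ and their inverses, and here there are real gaps. First, the reverse substitution divides by $b_{k+1,k}$, whose value is \emph{strictly positive} by \eqref{eq:vbli>0}, so it strictly decreases $\mu_0$ and can destroy condition (1) of a full $i$-th expansion (that the minimum coefficient value equals $\nu_i$). The paper sidesteps this entirely: since a larger $k$ always works, one never needs to move an index down; one moves it up and deletes the intermediate elements. Second, the correction terms $(A_k)_{\tilde{\bf Q}}$ are arbitrary elements of $\VR_K[{\bf X}_{<\ell_{q_k}}]$, not of the form $\tilde Q_{\ell'}$ for a neat pair, so the inductive hypothesis — which speaks only of $\tilde Q_{\ell'}$ for neat pairs of level $<s$ — does not give you the ``re-expand them neatly of level $s$'' step you invoke; even for genuine $\tilde Q_{\ell'}$ the hypothesis delivers level $<s$, not level $s$. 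Third, each forward substitution multiplies out a power $(b_{k+1,k}\tilde Q_{k+1}-(A_k)_{\tilde{\bf Q}})^m$, scattering new occurrences of variables from earlier plateaus whose offsets then also need adjustment; the claim that condition (3) of full expansions bounds each plateau to one index and hence forces termination conflates a property of the \emph{target} expansion with a property of the intermediate expressions, which can temporarily involve several indices from a single plateau. Finally, the phrase ``the necessary thinnings can be made compatible'' is doing a lot of unexamined work: one must take the maximum of all required offsets across the finitely many level-$s$ neat pairs, and one must process plateaus in the right order so that earlier deletions do not invalidate later ones, which is exactly what the paper's C$_{s,j}$ induction organizes. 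In short, the base case is on target; the inductive step should run the identical ``large offset + thinning'' argument rather than introduce substitutions.
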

\begin{proof} Let  $\text C_s$ denote the statement ``S$_{\ell,i}$ holds for all the neat pairs $(\ell,i)$ of level $s$". We will prove by induction on $s$ that C$_s$ holds for all $s$ after replacing $I^*$ by a suitable subset $\tilde I^*$ for an appopriate choice of full $i$-th expansions; this will complete the proof of the lemma.

Fix a non-negative integer $s$. Assume that
\begin{equation}
\text C_{s'}\text{ holds for all }s'<s .\label{l'i'<li}
\end{equation}
This includes the base of the induction (the case $s=0$), in which case the hypothesis \eqref{l'i'<li} is vacuously true. In this way, we will prove both the base of the induction and the induction step simultaneously.

Let $\{r_1,\dots,r_t\}$ denote the ordered set $\left\{q\in\{1,\dots,w\}\ \left|\
\# I^{(q)}=\infty\right.\right\}$. For a full $i$-th expansion \eqref{eq:fulliexpofQtildel}, a subset $T\subset\{1,\dots,t\}$ and indices $i_j\in I^{(r_j)}$, $j\in T$, we will say that {\bf \eqref{eq:fulliexpofQtildel} involves at most} $\left\{\tilde Q_{i_j}\right\}_{j\in T}$ if for all $j\in T$ we have
\[
j(Q_\ell,r_j)\in\{i_j,-\infty\}.
\]
For $j\in\{1,\dots, t\}$, let C$_{s, j}$ denote the statement ``for every neat pair $(\ell,i)$ of level $s$ satisfying $i\ge r_j$, the element $\tilde Q_\ell$ admits a full $i$-th expansion involving at most
$\tilde Q_{\ell_{r_j}+s}$, $\tilde Q_{\ell_{r_{j+1}}+s},\dots,\tilde Q_{\ell_{r_t}+s}$". We have  C$_{s,1}=$C$_s$. We will now gradually modify the set $I^*$, in the decreasing order of $j$, to ensure that C$_{s,j}$ holds for all $j\in\{1,\dots,t\}$. Once we arrive at the situation where C$_{s,1}=$C$_s$ holds, the proof of the lemma will be complete.

The statement C$_{s, t}$ holds automatically. Fix a $j\in\{1,\dots, t-1\}$ and assume that C$_{s,j+1}$ is true.

Take an integer $u\ge s$ large enough so that for every neat pair $(\ell,i)$ of level $s$ satisfying $i\ge r_j$, the element $\tilde Q_\ell$ admits a full $i$-th expansion involving at most $\tilde Q_{\ell_{r_j}+u}$, $\tilde Q_{\ell_{r_{j+1}}+s},\dots,\tilde Q_{\ell_{r_t}+s}$.
\medskip

Next, apply the procedure of Remark \ref{eq:decompositionI*andrelations3} (2) to $q=r_j$. Namely, put
\[
\tilde I^{(r_j)}=I^{(r_j)}\setminus\left\{\ell_{r_j}+ s,\dots,\ell_{r_j}+u-1\right\}.
\]
If $q\in\{1,\dots,w\}\setminus\{r_j\}$, put
\[
\tilde I^{(q)}=I^{(q)}.
\]
Finally, let $\tilde I^*=\coprod\limits_{ q=1}^w\tilde I^{( q)}$. Replacing $I^*$ by $\tilde I^*$, we arrive at the situation when C$_{s,j}$ holds. After finitely many iterations of this procedure we arrive at the situation when C$_{s,1}=$C$_s$ holds. By induction on $s$, this completes the proof of the lemma.
\end{proof}

\section{A description of the $\VR_K$-algebra $\VR_L$ in terms of generators and relations}\label{Characteri1}

The purpose of this section is to give a description of the $\VR_K$-algebra $\VR_L$ in terms of generators and relations. From now on, we will assume that $\bf{\tilde Q}$ is  neat.

Consider the natural maps
\[
\VR_K[\textbf{X}]\overset{\bf e}\lra K[x]\overset{\text{ev}_\eta}\lra L, \ \ F\mapsto F_{\tilde{\bf
Q}}\mapsto F_{\tilde{\bf Q}}(\eta).
\]
Let $\mathcal I=\text{Ker}(\text{ev}_\eta\circ{\bf e})$, so that
\[
\VR_L\simeq \VR_K[\textbf{X}]/\mathcal I.
\]

Below we will give an explicit description of $\mathcal I$ in terms of elements obtained from neat expansions of elements of $\mathbf{\tilde Q}$. This will be done using the ideals $\mathcal I_1$ and $\mathcal I_2$ defined in the Introduction. 

\begin{Obs} We have
\begin{equation}
\mathcal I_1\subset\mbox{Ker}\ {\bf e},\label{eq:I1inKere}
\end{equation}
in particular,
\begin{equation}
\mathcal I\supset\mathcal I_1.\label{eq:I1easyinclusion} 
\end{equation}
\end{Obs}

Consider elements $ i,\ell\in I^*$ such that $Q_i<_{\rm succ}Q_\ell$. We will now define an operation on
$K[\textbf{X}]\setminus\{0\}$ called the $(i,\ell)$-building. Fix a polynomial $F\in K[\textbf{X}]\setminus\{0\}$.

\begin{Obs} Since $Q_\ell$ is assumed to be strongly $Q_i$-monic, there exists a unique expression of the form
\begin{equation}
F=\sum\limits_{j=0}^da_j\left(\frac{Q_{\ell i}}{b_{\ell i}}\right)^j,\text{ where }a_j\in K[{\bf X}]\text{ and }
\deg_{X_i}a_j<\deg_{Q_i}Q_\ell.\label{eq:Qlibliexpansion}
\end{equation}
\end{Obs}

\begin{Def} The expression \eqref{eq:Qlibliexpansion} is called the $\frac{Q_{\ell i}}{b_{\ell i}}${\bf-expansion} of $F$. 
\end{Def}

\begin{Def} The $(i,\ell)${\bf-building} of $F$ is the polynomial $F_{\ell i}^{\text{bdg}}$  obtained from $F$ by substituting $X_\ell$ for $\frac{Q_{\ell i}}{b_{\ell i}}$ in the $\frac{Q_{\ell i}}{b_{\ell i}}$-expansion of $F$.
\end{Def}
\begin{Obs} We have
\begin{equation}
F\equiv F_{\ell i}^{\text{bdg}}\mod\,\mathcal I_1K[\bf X].\label{eq:bdgmodI1KX}
\end{equation}
\end{Obs}
\begin{Obs} The $(i,\ell)$-building leaves the polynomial $F$ unchanged if and only if
$\deg_{X_i}F<\frac{\deg_xQ_\ell}{\deg_xQ_i}$.
\end{Obs}
\begin{Obs}\label{eq:decompositionI*andrelations0} Take a $q\in\{1,\dots,w\}$ such that $\#I^{(q)}=\infty$. Consider a special case of the above definition where  $i\in I^{(q)}$ and $\ell=i+1$. Then $\tilde Q_{i+1,i}$ satisfies the equation
\[
b_{i+1,i}\tilde Q_{i+1,i}=X_i+A_i
\]
where
\begin{equation}
A_i\in\VR_K\left[{\bf X}_{<\ell_q}\right].\label{eq:Rli0}
\end{equation}
The $(i,i+1)$-building of $F$ consists of replacing every occurrence of $X_i$ in $F$ by $b_{i+1,i}X_{i+1}-A_i$ (recall that
$b_{i+1,i}\in\mathfrak m_L$).
\end{Obs}
We will now define the class of neat polynomials in $K[\bf X]$.  

\begin{Def} For a polynomial
\[
F\in K[\bf X]
\]
as in \eqref{eq:fexpansion} and $i\in I^*$, we  will say that $X_i$ {\bf appears in} $F$ if $X_i\ \left|\ \bf X^{\lambda_j}\right.$ (that is, if $\lambda_j(i)>0$) for some $j\in\{1,\dots,s\}$.
\end{Def}

Fix a polynomial $F\in K[{\bf X}]\setminus\{0\}$.
\begin{Def}\label{neatdefinition} We say that $F$ is {\bf neat} if the following three conditions hold.
\begin{enumerate}
\item For each $q\in\{1,\dots,w\}$ there exists at most one $i\in I^{(q)}$ such that $X_i$ appears in $F$. If such an $i$ does exist, we will denote it by $j(F,q)$. If there is no such $i$, we will write $j(F,q)=-\infty$; we adopt the convention that
$-\infty<I$.
\item There exists $s\in\N$ satisfying $j(F,q)-\ell_q=s$ for all $q\in\{1,\dots,w\}$ such that $\#I^{(q)}=\infty$ and $j(F,q)>-\infty$ (recall that we have $j(F,q)-\ell_q=0$ whenever $\#I^{(q)}=1$ and $j(F,q)>-\infty$).
\item Let $i$ be the unique element of $I^*$ such that
\[
F\in K[{\bf X}_{\le i}]\setminus K[{\bf X}_{<i}].
\]
Then for each $i'\in I_{<i}$ we have $\deg_{X_{i'}}F<\frac{(\deg_xQ_{i'})_+}{\deg_xQ_{i'}}$.
\end{enumerate}
In this situation, we will say that $F$ is {\bf neat of level} $s$, provided that the set of indices $q$ with
$\#I^{(q)}=\infty$ and $j(F,q)>-\infty$ appearing in (2) of this definition is non-empty. If the set of such $q$ is empty, we will say that $F$ is {\bf neat of level} 0.
\end{Def}

\begin{Obs} Recall that we are assuming that $\bf{\tilde Q}$ is neat. By definition, this implies that all the polynomials $Q_{\ell i}$ are neat and the polynomial $b_{\ell i}X_\ell-Q_{\ell i}$ is neat whenever $i\ll\ell$.
\end{Obs}

\begin{Prop}\label{neatuniqueness} Fix an index $i\in I^*$ and two neat polynomials
\[
F,\tilde F\in K[{\bf X}_{\le i}]\setminus K[{\bf X}_{<i}]
\]
of the same level. If $F\equiv\tilde F\mod\mathcal I_1K[\bf X]$ then $F=\tilde F$.
\end{Prop}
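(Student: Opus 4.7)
The plan is to set $G := F - \tilde F$ and show that $G = 0$. Since $F\equiv\tilde F\pmod{\mathcal I_1K[\mathbf X]}$, the inclusion \eqref{eq:I1inKere} gives $G\in\mathrm{Ker}\,\mathbf e$, so $G_{\tilde{\mathbf Q}}=0$ in $K[x]$. Thus the proposition is equivalent to the injectivity of $\mathbf e$ on the subspace of $K[\mathbf X]$ cut out by the neatness conditions.

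First I would pin down the support and degree data of $G$. Because $F$ and $\tilde F$ are neat of the same level $s$ and both lie in $K[\mathbf X_{\le i}]\setminus K[\mathbf X_{<i}]$, the variables that can occur in $G$ form a finite list $\{X_{i_1},\dots,X_{i_r}\}$ with $i_1<\dots<i_r=i$: for each plateau $q$ hit by at least one of $F,\tilde F$, the unique variable involved is $X_{\ell_q+s}$ (or $X_{\ell_q}$ if $I^{(q)}$ is a singleton). Condition (3) of Definition \ref{neatdefinition}, applied to $F$ and $\tilde F$ separately, transfers to $G$ via $\deg_{X_{i'}}G\le\max(\deg_{X_{i'}}F,\deg_{X_{i'}}\tilde F)$ and yields $\deg_{X_{i_j}}G<(\deg_xQ_{i_j})_+/\deg_xQ_{i_j}$ for every $j<r$.

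The heart of the argument is the following inductive linear-independence claim: writing $n_k:=\deg_xQ_k$, the products $\prod_{j=1}^r\tilde Q_{i_j}^{\lambda_j}$, taken over tuples $\lambda$ with $\lambda_jn_{i_j}<(n_{i_j})_+$ for $j<r$, are $K$-linearly independent in $K[x]$. I would prove this by induction on $r$. The base case $r=1$ is immediate since $\tilde Q_{i_1}^{\lambda_1}$ for distinct $\lambda_1$ have pairwise distinct $x$-degrees. For the induction step, group a supposed dependence relation by powers of $Q_{i_r}$:
\[
0=\sum_{\lambda_r\ge 0}Q_{i_r}^{\lambda_r}\,h_{\lambda_r}(x),\qquad h_{\lambda_r}:=a_{i_r}^{-\lambda_r}\sum_{\lambda_1,\dots,\lambda_{r-1}}c_{\lambda_1,\dots,\lambda_{r-1},\lambda_r}\prod_{j<r}\tilde Q_{i_j}^{\lambda_j}.
\]
Invoking the classical divisibility $n_q\mid n_{q+1}$ of consecutive plateau degrees of key polynomials, the bound $\lambda_jn_{i_j}<(n_{i_j})_+$ sharpens to $\lambda_jn_{i_j}\le(n_{i_j})_+-n_{i_j}\le n_{i_{j+1}}-n_{i_j}$; telescoping gives
\[
\deg_xh_{\lambda_r}\le\sum_{j=1}^{r-1}(n_{i_{j+1}}-n_{i_j})=n_{i_r}-n_{i_1}<n_{i_r}.
\]
Hence the displayed expression is a genuine $Q_{i_r}$-expansion of $0$, and its uniqueness forces every $h_{\lambda_r}=0$; the induction hypothesis then forces every coefficient of $G$ to vanish.

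The main obstacle I anticipate is the divisibility $n_q\mid n_{q+1}$ of consecutive plateau degrees, which is exactly what makes the telescoping estimate work and the $Q_{i_r}$-expansion legitimate. This is a standard property of key polynomials (see \cite{Novspivkeypol}) but should be explicitly invoked. A secondary bookkeeping point is that $G$ must genuinely live in $K[\mathbf X_{\le i}]$ with the correct largest index $i_r=i$; this holds because at least one of $F,\tilde F$ lies outside $K[\mathbf X_{<i}]$, so $X_i$ appears in the common support list, and condition (3) transfers as noted above.
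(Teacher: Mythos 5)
Your proof is correct and follows essentially the same route as the paper's. The paper also reduces to showing that $G:=F-\tilde F\in\ker\mathbf{e}$ forces $G=0$, and it does so by isolating the highest variable $X_{i'}$ appearing in $G$, observing that the $X_{i'}$-adic coefficients $F_j$ lie in $K[\mathbf{X}_{<\ell_{q_{i'}}}]$ (this is where "same level" is used), and then claiming that conditions (1)--(3) of Definition~\ref{neatdefinition} force $\deg_x(F_j)_{\tilde{\mathbf Q}}<\deg_xQ_{i'}$, so the leading $Q_{i'}$-adic term strictly dominates in $x$-degree and $G_{\tilde{\mathbf Q}}\neq 0$. Your telescoping estimate is precisely a proof of that asserted degree bound, and you are right to flag that it hinges on the divisibility of consecutive plateau degrees $n_q\mid n_{q+1}$: without it, bounds like $\lambda_j n_{i_j}<(n_{i_j})_+$ do not telescope (e.g.\ $n_1=2,n_2=3,n_3=10$ permits $\lambda_1 n_1+\lambda_2 n_2=11>10$). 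The paper uses this standard fact about key polynomials implicitly; your write-up makes it explicit, which is a small improvement in completeness but not a different proof.
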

\begin{proof} We prove the proposition by contradiction. Assume that
\[
F-\tilde F\ne0.
\]
Write
\begin{equation}
F-\tilde F=\sum\limits_{j=0}^rF_jX_{i'}^j,\label{eq:fXiexpansion}
\end{equation}
where $i'\in I_{\le i}$, $F_j\in K[{\bf X}_{<\ell_{q_{i'}}}]$ (this is where we use the hypothesis that $F$ and
$\tilde F$ are neat of the same level), and $F_r\ne0$. Conditions (1)--(3) of Definition \ref{neatdefinition} imply that
\begin{equation}
\deg_x(F_j)_{\bf{\tilde Q}}<\deg_xQ_{i'}\quad\text{for all }j\in\{0,\dots,r\}.\label{eq:upperbounddegQi}
\end{equation}
It follows from \eqref{eq:upperbounddegQi} that
\[
\deg_x\left((F_r)_{\bf{\tilde Q}}Q_{i'}^r\right)>\deg_x\left((F_j)_{\bf{\tilde Q}}Q_{i'}^j\right)
\]
for all $j\in\{0,\dots,r-1\}$. This contradicts \eqref{eq:I1inKere} in view of the fact that
\[
F-\tilde F\in\mathcal I_1K[{\bf X}].
\]
The proposition is proved.
\end{proof}
Consider elements $i,\ell\in I$ such that
\begin{equation}
Q_i<_{\rm succ}Q_\ell.\label{eq:Qi'succQell}
\end{equation}
Recall that by definition of full $i$-th expansions, for every $q'\in\{1,\dots,q_\ell\}$ there is at most one $i'\in I^{(q')}$ such that $X_{i'}$ appears in $Q_{\ell i}$. As above, we denote this unique $i'$ by $j(Q_{\ell i},q')$. Let us write $j(\ell,i,q'):=j(Q_{\ell i},q')$. Note that if $\# I^{(q')}=1$ and $j(\ell,i,q')>-\infty$, then $j(\ell,i,q')=\ell_{q'}$, the unique element of $I^{(q')}$.
\medskip

Next, we define the operation of the total $s$-building of a polynomial $F$ for $s\in\N$.
\smallskip

\noindent{\bf Notation.} For $q\in\{1,\dots,w\}$ and $i\in I^{(q)}$, let ${\bf X}_{q,\le i}:=(X_{i'})_{\ell_q\le i'\le i}$.
\medskip

Take a polynomial $F\in K[{\bf X}]\setminus\{0\}$ and let $q$ be the element of $\{1,\dots,w\}$ such that $F\in K\left[{\bf
X}_{<\ell_{q+1}}\right] \setminus K\left[{\bf X}_{<\ell_q}\right]$. Fix a natural number $s$.

\begin{Obs} By Proposition \ref{neatuniqueness}, there is at most one neat polynomial $F_s$ of level $s$ in $K\left[{\bf
X}_{<\ell_{q+1}}\right]\setminus K\left[{\bf X}_{<\ell_q}\right]$ such that
\begin{equation}
F-F_{s}\in\mathcal I_1K[\bf X].\label{eq:totibuidingmodI1}
\end{equation}
\end{Obs}
\begin{Def}\label{Deftotali-building} {\bf The total $s$-building} $F_s$ of $F$ is the unique neat polynomial of level $s$ satisfying \eqref{eq:totibuidingmodI1} (if it exists).
\end{Def}

Keep the above notation. The purpose of the next proposition is to show that $F_{s}$ exists provided the integer $s$ is sufficiently large, namely, provided that for every $X_i$ appearing in $F$ we have
\begin{equation}\label{eq:iinIqsufflarge}
i-\ell_{q_i}\le s.
\end{equation}
Under this assumption the total $s$-building $F_{s}$ can be obtained from $F$ by finitely many applications of
$(i,\ell)$-buildings with different neat pairs $(i,\ell)$. We will describe an explicit algorithm for doing this. In order to show that our algorithm terminates, we need to impose a partial ordering on the set $K[\bf X]$, an ordering which we now define.

\begin{Def} For $i\in I^*$, the {\bf virtual degree of} $X_i$, denoted by $\vdeg\ X_i$, is the quantity $\deg_xQ_i$. For a monomial $\prod\limits_{j=1}^sX_{i_j}^{\gamma_j}$, $\gamma_j\in\N$, we define its virtual degree to be
$\vdeg\prod\limits_{j=1}^sX_{i_j}^{\gamma_j}:=\sum\limits_{j=1}^s\gamma_j\,\vdeg\ X_{i_j}$. The virtual degree of a polynomial in $\bf X$ is defined to be the maximal virtual degree of its monomials. For a non-negative integer $k$, a polynomial is said to be {\bf virtually homogeneous of degree} $k$ if all the monomials appearing in it have virtual degree $k$.
\end{Def}
We now define the partial ordering $\prec$ on $K[{\bf X}]$. Order the $X_i$ is the increasing order of $ i$ (the variable $X_{0}$ comes first). Let $\prec_{\text{lex}}$ denote the lexicographical ordering on the set of monomials in $\bf X$. Furthermore, given two polynomials $F$ and $G$, we say that $F\prec_{\text{lex}}G$ if the ordered list of all the monomials of $F$, written in the decreasing order with respect to $\prec_{\text{lex}}$, is lexicographically smaller than the list of the monomials of $G$ (if two lists of monomials do not have equal length, we complete the shorter list by adding a suitable number of zeroes at the end). Finally, define the partial ordering $\prec$ on $K[\bf X]$ as follows. Given two distinct polynomials $F,G\in K[\bf X]$, write $F=\sum\limits_{j=0}^sF_j$ and $G=\sum\limits_{j=0}^sG_j$, where:
\begin{enumerate}
\item for each $j$, $F_j$ and $G_j$ are virtually homogeneous of degree $j$
\item some of the $F_j$ are allowed to be 0, including the leading terms $F_s$ and $G_s$ (without loss of generality, we may assume that at least one of $F_s$ and $G_s$ is different from 0).
\end{enumerate}
We say that $F\prec G$ if there exists $j\in\{1,\dots s\}$ such that $F_k=G_k$ for all $k\in\{j+1,\dots,s\}$ and
$F_j\prec_{\text{lex}}G_j$.
\begin{Obs}%\begin{enumerate}
%\item
Consider elements $i,\ell\in I^*$ such that $Q_i<_{\rm succ}Q_\ell$. We have
$\vdeg\left(b_{\ell i}X_\ell-Q_{\ell i}\right)=\vdeg\ X_\ell=\deg_xQ_\ell$. Thanks to the strongly monic property, the polynomial $b_{\ell i}X_\ell-Q_{\ell i}$ contains two monomials of virtual degree $\deg_xQ_\ell$, namely, $b_{\ell i}X_\ell$ and $X_i^{r_{\ell i}}$, where the notation is as in Definition \ref{stronglymonic}. All the remaining monomials have strictly smaller virtual degrees. This implies that applying the operation of $(i,\ell)$-building to a polynomial $F$ strictly decreases it with respect to $\prec$, provided $\deg_{X_i}F\ge\frac{\deg_xQ_\ell}{\deg_xQ_i}$
%\item If $\Theta$ is a finite subset of $I^*$ and $N\in\N$, the set of all monomials in
%\[
%{\bf X}_\Theta:=\{X_i\ |\ i\in\Theta\}
%\]
%whose virtual degree is smaller than $N$ is finite.
%\end{enumerate}
\end{Obs}

\begin{Prop} Take a polynomial $F\in K[{\bf X}]\setminus\{0\}$ and a natural number $s$ and assume that \eqref{eq:iinIqsufflarge} holds. The total $s$-building $F_{s}$ exists and can be obtained from $F$ after finitely many applications of $(i,\ell)$-buildings with different neat pairs $(i,\ell)$, all of level at most $s$.
\end{Prop}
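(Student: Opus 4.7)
The plan is to construct $F_s$ by iteratively applying $(i,\ell)$-buildings of two specific types, and to invoke the well-foundedness of the partial order $\prec$ on a suitable finite set of monomial supports to conclude termination. Define a pair \emph{of type A} to be any $(i+1,i)$ with $i$ lying in an infinite plateau $I^{(q)}$ and $i-\ell_q<s$; the corresponding $(i,i+1)$-building is to be applied whenever $X_i$ actually appears in $F$. Define a pair \emph{of type B} to be any $(\ell,i)$ in which $X_i$ is at the ``designated position'' of its plateau, meaning $i=\ell_{q_i}$ if $\#I^{(q_i)}=1$ and $i=\ell_{q_i}+s$ if $\#I^{(q_i)}=\infty$, while $\ell\in I^{(q_i+1)}$ is chosen as $\ell_{q_i+1}+s$ when $\#I^{(q_i+1)}=\infty$ and as $\ell_{q_i+1}$ otherwise; the corresponding $(i,\ell)$-building is to be applied whenever $i$ is strictly smaller than the largest index appearing in $F$ and $\deg_{X_i}F\ge(\deg_xQ_i)_+/\deg_xQ_i$. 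A direct check against the definitions shows that every such pair is neat and has level at most $s$.

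Next I would establish two invariants preserved by each application. First, the virtual degree of $F$ is preserved, since the substitution $Q_{\ell i}/b_{\ell i}\mapsto X_\ell$ swaps polynomials of equal virtual degree $\deg_xQ_\ell$. Second, the level $j-\ell_{q_j}$ of every variable $X_j$ that can possibly appear at any stage remains at most $s$: in type A the new variable $X_{i+1}$ is in the same plateau $I^{(q)}$, and $A_i$ comes from a neat expansion of $\tilde Q_{i+1}$ of level $i-\ell_q<s$; in type B the new variable $X_\ell$ sits at level $s$ or $0$, while the coefficients appearing in the $Q_{\ell i}/b_{\ell i}$-expansion of $F$ draw their variables only from those of $F$ and of the neat (of level $s$) polynomial $Q_{\ell i}/b_{\ell i}$. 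Together with the elementary fact that only finitely many plateaus $q$ satisfy $n_q\le\vdeg F$, these two invariants confine the monomials that can occur during the procedure to a fixed finite set.

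By the remark preceding the proposition, every applicable $(i,\ell)$-building (the condition $\deg_{X_i}F\ge\deg_xQ_\ell/\deg_xQ_i$ being satisfied in both of our types) strictly decreases $F$ with respect to $\prec$. Since $\prec$ is well-founded on the finite set of possible monomial supports, the algorithm terminates after finitely many steps. When it halts, non-applicability of type A together with the level invariant forces conditions (1) and (2) of Definition \ref{neatdefinition} to hold, because each variable from an infinite plateau $I^{(q)}$ must then sit at the single level $s$ position $i=\ell_q+s$; non-applicability of type B forces condition (3) for every index $i'$ strictly below the largest one in $F$. Hence the final polynomial is neat of level $s$, is congruent to $F$ modulo $\mathcal{I}_1K[\mathbf{X}]$ by \eqref{eq:bdgmodI1KX}, and is therefore the unique $F_s$ by Proposition \ref{neatuniqueness}.

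The main obstacle I expect is making the ``level $\le s$'' invariant genuinely airtight, especially under type B buildings: the coefficients of the $Q_{\ell i}/b_{\ell i}$-expansion could a priori introduce extraneous variables through the remainder terms. The resolution is to observe that this division is performed in $K[\mathbf{X}]$ by a divisor that is strongly monic in $X_i$, so the coefficients are polynomials whose variables are inherited only from those of $F$ and of $Q_{\ell i}/b_{\ell i}$, both already confined to the desired finite set.
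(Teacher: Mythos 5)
Your overall strategy matches the paper's: iterate $(i,\ell)$-buildings inside a ring with finitely many relevant variables, invoke the remark that each building strictly decreases $F$ under $\prec$, and conclude termination, then verify neatness of the output. Your explicit point that the virtual degree and the level $\le s$ invariant confine the procedure to finitely many possible monomial supports is a genuine improvement in clarity: the paper's own proof merely asserts that ``$F$ cannot decrease indefinitely'' without spelling out why $\prec$ is well-founded on the relevant set. The paper is otherwise lighter-handed than you are -- it simply applies \emph{any} neat pair $(\ell,i)$ with $\ell,i\in\Theta$ and $\deg_{X_i}F\ge\frac{\deg_xQ_\ell}{\deg_xQ_i}$, rather than prescribing your type A/B schedule; since the total $s$-building is unique by Proposition~\ref{neatuniqueness}, the order of operations is immaterial, so prescribing a schedule costs nothing but also buys nothing.

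There is, however, a concrete error in your type B definition. When $\#I^{(q_i)}=1$ while $\#I^{(q_i+1)}=\infty$ and $s>0$, you set $\ell=\ell_{q_i+1}+s$; but then $(\ell,i)$ is not a $<_{\mathrm{succ}}$ pair at all. By definition $Q_i<_{\rm succ}Q_\ell$ requires either $\ell=i+1$ (which here forces $s=0$, since $\ell_{q_i+1}=\ell_{q_i}+1=i+1$) or $\#I^{(q_i)}=\infty$ (false by hypothesis). Hence neither $Q_{\ell i}$ nor the $(i,\ell)$-building is defined for this pair, and your assertion that ``a direct check against the definitions shows that every such pair is neat'' fails in this case. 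The consequence is that your algorithm has no legal move to make precisely when it needs to cross from a singleton plateau into a following infinite plateau, so it can stall before reaching a neat polynomial of level $s$. The repair is straightforward: take the genuine successor pair $(\ell_{q_i+1},\ell_{q_i})=(i+1,i)$ (which is neat of level $0$), apply its building to introduce $X_{\ell_{q_i+1}}$ at level $0$, and then let your type A moves raise that variable within $I^{(q_i+1)}$ up to level $s$. With this amendment your argument goes through.
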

\begin{proof} If the polynomial $F$ is its own total $s$-building, there is nothing to prove. Assume this is not the case.

Let $q$ be the element of $\{1,\dots,w\}$ such that $F\in K\left[{\bf X}_{<\ell_{q+1}}\right] \setminus K\left[{\bf
X}_{<\ell_q}\right]$. Let
\[
\Theta:=\bigcup\limits_{q'=1}^q\left\{\left.i\in I^{(q')}\ \right|\ i- \ell_{q'}\le s\right\}\mbox{ and }{\bf X}_\Theta=\{X_i\ |\ i\in\Theta\}.
\]
All of our building operations will take place inside the ring $K[{\bf X}_\Theta]$. To say that $F$ does not equal its total
$s$-building is equivalent to saying that there exists a neat pair $(\ell,i)$ with $\ell,i\in\Theta$, such that
$\deg_{X_{i}}F\ge\frac{\deg_xQ_\ell}{\deg_xQ_{i}}$. In this case, perform an $(i,\ell)$-building. This operation strictly decreases  $F$ with respect to $\prec$. If the $(i,\ell)$-building of $F$ is not the total
$s$-building, iterate the procedure. Since the polynomial $F$ cannot decrease indefinitely, the process must stop at the total $ s$-building $F_{s}$ of $F$, as desired.
\end{proof}
\begin{Obs}\label{5remarks}
\begin{enumerate}
\item  Write 
\[
F_{s}=\sum\limits_{\gamma\in\N}F_{\gamma}({\bf X}_{<\ell_q+s}){\bf X}_{\ell_q+s}^\gamma,
\]
where
\[
F_{\gamma}({\bf X}_{<\ell_q+s})\in K[{\bf X}_{<\ell_q+s}].
\]
Then (1) and (3) of Definition \ref{neatdefinition} imply that
\[
\nu_{\ell_q+s}\left((F_\gamma)_{\tilde{\bf Q}}\right)=\nu\left((F_\gamma)_{\tilde{\bf Q}}\right).
\]
\item By \eqref{eq:vbli>0}, the $(i,\ell)$-building operation does not decrease the quantity $\mu_0(F)$. In particular, if $F\in\VR_K[\textbf{X}]$, then $F_{s}\in\VR_K[\textbf{X}]$ and the congruence \eqref{eq:bdgmodI1KX} becomes
\begin{equation}
F\equiv F_{ s}\mod\,\mathcal I_1.\label{eq:bdgmodI1}
\end{equation}
\item By part (1) of this remark and equation \eqref{eq:nui(f)=minvbj}, we have
\[
\mu_0(F_{s})=\nu_{\ell_q+s}\left(F_{\bf{\tilde Q}}\right).
\]
\end{enumerate}
\end{Obs}

\begin{Teo}\label{generatorsCalI} We have
\begin{equation}
\mathcal I=\mathcal I_1+\mathcal I_2.\label{eq:I1+I2} 
\end{equation}
\end{Teo}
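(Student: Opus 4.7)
The plan is to establish both inclusions in \eqref{eq:I1+I2}. The inclusion $\mathcal{I}_1+\mathcal{I}_2\subseteq\mathcal{I}$ is the easy direction: $\mathcal{I}_1\subseteq\mathcal{I}$ is noted in \eqref{eq:I1easyinclusion}, and for each generator $Q_{i_{\max},i}$ of $\mathcal{I}_2$ the identity $(Q_{i_{\max},i})_{\tilde{\mathbf{Q}}}=b_{i_{\max},i}\tilde{Q}_{i_{\max}}=b_{i_{\max},i}g$ vanishes at $\eta$. The substance of the theorem is the reverse inclusion $\mathcal{I}\subseteq\mathcal{I}_1+\mathcal{I}_2$; I will prove it in two steps, treating the base case $F_{\tilde{\mathbf{Q}}}=0$ first and then reducing the general case to it.

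\textbf{Base case.} Take $F\in\mathcal{I}$ with $F_{\tilde{\mathbf{Q}}}=0$. Choose $s$ large enough so that \eqref{eq:iinIqsufflarge} holds for every $X_i$ appearing in $F$, and apply the total $s$-building to produce $F_s\in\VR_K[\mathbf{X}]$, neat of level $s$, with $F-F_s\in\mathcal{I}_1$ by Remark \ref{5remarks}(2). Since $\mathcal{I}_1\subseteq\ker\mathbf{e}$, one has $(F_s)_{\tilde{\mathbf{Q}}}=0$. I claim $F_s=0$: with $i^*$ the largest index appearing in $F_s$, the $X_{i^*}$-expansion $F_s=\sum_{j=0}^{r}F_j X_{i^*}^j$ (with $F_r\neq 0$) satisfies $\deg_x(F_j)_{\tilde{\mathbf{Q}}}<\deg_x Q_{i^*}$ by conditions (1)--(3) of Definition \ref{neatdefinition} (this is the content of \eqref{eq:upperbounddegQi}), so the summands $(F_j)_{\tilde{\mathbf{Q}}}\tilde{Q}_{i^*}^{j}$ have pairwise distinct $x$-degrees whenever non-zero. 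The leading term cannot be cancelled in the vanishing sum, forcing $(F_j)_{\tilde{\mathbf{Q}}}=0$ for every $j$; recursion on the number of variables of $F_s$ gives $F_j=0$ for every $j$, hence $F_s=0$ and $F\in\mathcal{I}_1$. This is essentially the argument of Proposition \ref{neatuniqueness} applied to $F_s$ and $\tilde F=0$.

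\textbf{General case.} Now take $F\in\mathcal{I}$ with $F_{\tilde{\mathbf{Q}}}\neq 0$. Since $F_{\tilde{\mathbf{Q}}}(\eta)=0$, $g\mid F_{\tilde{\mathbf{Q}}}$ in $K[x]$; write $F_{\tilde{\mathbf{Q}}}=g\phi$. I will construct $\Phi\in\mathcal{I}_2$ with $\Phi_{\tilde{\mathbf{Q}}}=F_{\tilde{\mathbf{Q}}}$: then $F-\Phi\in\mathcal{I}$ with $(F-\Phi)_{\tilde{\mathbf{Q}}}=0$ falls under the base case, giving $F-\Phi\in\mathcal{I}_1$ and hence $F\in\mathcal{I}_1+\mathcal{I}_2$. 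To build $\Phi$, pick $i\in I^*$ with $Q_i<_{\rm succ}Q_{i_{\max}}$ large enough that every variable of $F$ lies in $\mathbf{X}_{\le i}$ (take $i=\ell_w$ if $\#I^{(w)}=1$, or any sufficiently large $i\in I^{(w)}$ otherwise). Because $\nu_i(\tilde{Q}_k)=0$ for every $k\le i$, writing $F=\sum b_j\mathbf{X}^{\lambda_j}$ with $b_j\in\VR_K$ gives $\nu_i(F_{\tilde{\mathbf{Q}}})\ge\min_jv(b_j)\ge 0$, whence $\nu_i(\phi)=\nu_i(F_{\tilde{\mathbf{Q}}})-\nu_i(g)\ge -\nu_i(g)=v(b_{i_{\max},i})$. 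Taking a full $i$-th expansion $\phi/b_{i_{\max},i}=\sum c_j\tilde{\mathbf{Q}}^{\lambda_j}$ from Section \ref{Characteri} gives $\min_jv(c_j)=\nu_i(\phi/b_{i_{\max},i})\ge 0$, so $H:=\sum c_j\mathbf{X}^{\lambda_j}\in\VR_K[\mathbf{X}]$. Setting $\Phi:=H\cdot Q_{i_{\max},i}\in\mathcal{I}_2$ yields $\Phi_{\tilde{\mathbf{Q}}}=(\phi/b_{i_{\max},i})\cdot b_{i_{\max},i}g=g\phi=F_{\tilde{\mathbf{Q}}}$, as required.

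\textbf{The main obstacle} will be the valuation-theoretic estimate $\nu_i(F_{\tilde{\mathbf{Q}}})\ge 0$ underlying the general case: it depends on choosing $i$ above all variables of $F$ so that $\nu_i$ is trivial on every $\tilde{Q}_k$ occurring in $F$, and on the fact that the truncation $\nu_i$ is a genuine valuation on $K(x)$ (not merely a pseudo-valuation on $K[x]/(g)$), so that the identity $\nu_i(\phi)=\nu_i(F_{\tilde{\mathbf{Q}}})-\nu_i(g)$ is meaningful and gives the required lower bound on $\nu_i(\phi)$. Once this integrality is secured, the existence of a full $i$-th expansion of $\phi/b_{i_{\max},i}$ realizing its $\nu_i$-value (Section \ref{Characteri}) produces the $H\in\VR_K[\mathbf{X}]$ that closes the argument.
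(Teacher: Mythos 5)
Your proof is correct and ultimately hinges on the same two ingredients as the paper's proof—the uniqueness of neat polynomials (Proposition \ref{neatuniqueness}) and the valuation estimate $\nu_i$ of the cofactor being non-negative—but it is organized differently. You isolate a ``base case'' (if $F\in\VR_K[\mathbf X]$ satisfies $F_{\tilde{\mathbf Q}}=0$ then $F\in\mathcal I_1$) and reduce the general case to it by subtracting an explicit $\Phi=H\cdot Q_{i_{\max}i}\in\mathcal I_2$; the cofactor $H$ is produced directly from a full $i$-th expansion of $\phi/b_{i_{\max}i}$. The paper instead routes the argument through the auxiliary ring $K[X_0]$ via the commutative diagram \eqref{eq:commutative}: it proves $\bar{\mathcal I}=(\mathcal I_1+(g(X_0)))K[\mathbf X]$, passes to the total reduction $F^{\mathrm{tred}}\in K[X_0]$, factors $F^{\mathrm{tred}}=g^{(i)}R$, and then applies a total $s$-building to $R$ to return to $\VR_K[\mathbf X]$. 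Your $\phi/b_{i_{\max}i}$ is exactly the paper's $R$ (after the identification $K[X_0]\cong K[x]$), and your estimate $\nu_i(\phi/b_{i_{\max}i})\ge0$ matches the paper's $\mu_0(R_s)\ge0$. One advantage of your presentation is that making the base case explicit cleanly justifies passing from a congruence modulo $\mathcal I_1K[\mathbf X]$ to one modulo $\mathcal I_1$ within $\VR_K[\mathbf X]$, a point the paper's final step treats more tersely. You are also right to flag that the argument needs $\nu_i$ to be multiplicative (a genuine valuation) for the identity $\nu_i(\phi)=\nu_i(F_{\tilde{\mathbf Q}})-\nu_i(g)$ and for $\nu_i(\tilde{\mathbf Q}^{\lambda_j})=0$; the paper uses this implicitly as well. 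Two small cautions: in the base case you should not literally invoke Proposition \ref{neatuniqueness} with $\tilde F=0$ (since $0$ does not lie in $K[\mathbf X_{\le i}]\setminus K[\mathbf X_{<i}]$), but as you note the degree argument of its proof goes through verbatim; and in choosing $i$ in the general case you must take $i\in I^{(w)}$ (not merely $Q_i<_{\mathrm{succ}}Q_{i_{\max}}$ in the abstract) with $i$ at least as large as every index occurring in $F$, which your phrasing already handles correctly in both the $\#I^{(w)}=1$ and $\#I^{(w)}=\infty$ cases.
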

\begin{proof} Consider the commutative diagram
\begin{equation}
\xymatrix{\VR_K[\textbf{X}]&&\longrightarrow&&\VR_L\\
\downarrow&&&&\downarrow\\
K[\textbf{X}]&\overset{\bf e}\longrightarrow&K[x]&\overset{\text{ev}_\eta}\lra&L}\label{eq:commutative}
\end{equation}
Let $\bar{\mathcal I}:=\ker\,(\text{ev}_\eta\circ{\bf e})$. Since the vertical arrows in \eqref{eq:commutative} are injections, we have
\begin{equation}
\mathcal I=\bar{\mathcal I}\cap\VR_K[\textbf{X}].\label{eq:tildeIcontractstoI}
\end{equation}
We claim that
\begin{equation}
\bar{\mathcal I}=(\mathcal I_1+(g(X_0)))K[\textbf{X}].\label{eq:generatorsItilde}
\end{equation}
Indeed, we have
\begin{equation}
\mathcal I_1K[\textbf{X}]=\ker\,(\bf e)\label{eq:Kerbolde}
\end{equation}
and
\begin{equation}
(g(x))K[x]=\ker\,(\text{ev}_\eta).\label{eq:Keeta}
\end{equation}
Moreover, every element $F\in K[\bf X]$ is congruent modulo $\mathcal I_1K[\textbf{X}]$ to a unique element $\bar F\in K[X_0]$ and restricting $\bf e$ to $K[X_0]$ induces an  isomorphism
\begin{equation}
{\bf e}\left|_{K[X_0]}\right.:K[X_0]\cong K[x]\label{eq:isomorphismKX0Kx}
\end{equation}
that maps $X_0$ to $x$ and $g(X_0)$ to $g(x)$. Formulae \eqref{eq:Kerbolde}--\eqref{eq:isomorphismKX0Kx} show that
\begin{equation}
\bar{\mathcal I}\supset(\mathcal I_1+(g(X_0)))K[\textbf{X}],\label{eq:righthandsidecontainedinleft}
\end{equation}
in particular,
\begin{equation}
\mathcal I_1K[\textbf{X}]\subset\bar{\mathcal I}.\label{eq:I1inIbar}
\end{equation}
To prove the opposite inclusion in \eqref{eq:righthandsidecontainedinleft}, consider an element
\begin{equation}
F\in\bar{\mathcal I}.
\end{equation}
By \eqref{eq:Keeta}, \eqref{eq:isomorphismKX0Kx} and \eqref{eq:I1inIbar}, we have $\bar F\in(g(X_0))K[X_0]$. This proves the equality \eqref{eq:generatorsItilde}.
\medskip

\noindent{\bf Notation.} For $i\in I^*$ with $Q_i<_{\rm succ}g$, let $h_i:=b_{i_{\max}i}$ and $g^{(i)}=h_ig(X_0)$.
\medskip

For $F\in K[\textbf{X}]$ and $ i,\ell\in I^*$ such that $Q_i<_{\rm succ}Q_\ell$, we will now define an operation inverse to
$(i,\ell)$-building: the $(i,\ell)$-reduction.

\begin{Def} The $(i,\ell)${\bf-reduction} of $F$ is the polynomial $F_{\ell i}^{\text{red}}$ obtained from $F$ by substituting
$\frac{Q_{\ell i}}{b_{\ell i}}$ for $X_\ell$.
\end{Def}

After finitely many applications of $(i,\ell)$-reductions with different neat pairs $(i,\ell)$, every polynomial $F$ can be turned into a polynomial $F^{\text{tred}}(X_0)\in K[X_0]$ (called {\bf the total reduction} of $F$). We have
\begin{equation}
F\equiv F^{\text{tred}}(X_0)\mod\,\mathcal I_1K[\textbf{X}].\label{eq:fcongtotredmodI}
\end{equation}
Since every element of $K[X_0]$ is neat, by Proposition \ref{neatuniqueness} the polynomial $F^{\text{tred}}(X_0)$ depends only on $F$ and not on the specific chain of $(i,\ell)$-reductions used to construct it.

\begin{Obs}\label{substituteX0} In fact, $F^{\text{tred}}(X_0)$ can be obtained from $F$ by substituting $\tilde Q_i(X_0)$ for $X_i$, for each variable $X_i$ appearing in $F$. In other words, we have the equality $F^{\text{tred}}(X_0)=F_{\bf\tilde
Q}(X_0)$. This is seen by induction on the maximal index $i$ such that $X_i$ appears in $F$.
\end{Obs}

\begin{Obs}\label{partialorderingprec}\begin{enumerate}
\item Fix indices $i,i'\in I^{(w)}$, $i<i'$. We have $Q_i<_{\rm succ}g$. Let $s=i'-\ell_w$. By Proposition \ref{neatuniqueness}, $\frac{Q_{i_{\max} i'}}{b_{i_{\max} i'}}$ is the total $s$-building of
$\frac{Q_{i_{\max} i}}{b_{i_{\max} i}}$.
\item Keep the notation of part (1) of this remark. In view of part (2) of Remark \ref{5remarks}, we have
\[
-vb_{i_{\max}i'}=\mu_0\left(\frac{Q_{i_{\max}i'}}{b_{i_{\max}i'}}\right)>\mu_0\left(\frac{Q_{i_{\max} i}}{b_{i_{\max}i}}\right)=-vb_{i_{\max}i}.
\]
In other words, $vb_{i_{\max}i'}<vb_{i_{\max}i}$.
\end{enumerate}
\end{Obs}
\medskip

To prove the  inclusion $\supset$ in \eqref{eq:I1+I2}, first note that, obviously,
\begin{equation}
\mathcal I_1\subset\mathcal I.\label{eq:I1inI}
\end{equation}
To prove that $\mathcal I_2\subset\mathcal I$, fix an $i\in I^*$ with $Q_i<_{\rm succ}g$ and consider the element
\[
Q_{i_{\text{max}}i}\in\mathcal I_2.
\]
Let $s$ denote the level of $Q_{i_{\text{max}}i}$. We have
\begin{equation}
g^{(i)}\in K[X_0]\cap\bar{\mathcal I}.\label{eq:inI}
\end{equation}
By Remark \ref{substituteX0}, $g^{(i)}$ is nothing but the total reduction of $Q_{i_{\text{max}}i}$; in particular, these two polynomials are congruent modulo $\mathcal I_1K[{\bf X}]$.  Since $Q_{i_{\max}i}$ is the unique neat polynomial of level $s$ involving the variable $X_{i}$ and congruent to $g^{(i)}$ modulo $\mathcal I_1K[{\bf X}]$ (by Proposition \ref{neatuniqueness}), we have
\begin{equation}
Q_{i_{\max}i}=g_{ s}^{(i)}.\label{eq:totalibdgofg}
\end{equation}
By \eqref{eq:minusthevalue}, we have $\mu_0\left(Q_{i_{\text{max}}i}\right)=0$, so $Q_{i_{\text{max}}i}\in\VR_K[\textbf{X}]$. Combining this with \eqref{eq:tildeIcontractstoI}, \eqref{eq:generatorsItilde}, \eqref{eq:inI} and \eqref{eq:totalibdgofg}, we obtain
\[
Q_{i_{\text{max}}i}\in\mathcal I,
\]
as desired. This completes the proof of the inclusion $\supset$ in \eqref{eq:I1+I2}.

Let us prove the inclusion $\subset$. Take an element $F\in\mathcal I$. Let 
\[
s:=\max\{i-\ell_{q_i}\ |\ i\in I^*,X_i\text{ appears in }F\}.
\]
Replacing $F$ by its total $s$-building for a sufficiently large $s$, we may assume, without loss of generality, that $F$ is neat of level $s$. We have
\begin{equation}
\mu_0\left(F\right)\ge0.\label{eq:nuifpositive}
\end{equation}
Let
\begin{eqnarray*}
i&=&\ell_w\ \ \qquad\text{if }\#I^{(w)}=1\quad\text{and}\\
&=&\ell_w+s\quad\text{if }\#I^{(w)}=\infty.
\end{eqnarray*}
By \eqref{eq:tildeIcontractstoI} and \eqref{eq:generatorsItilde}, $F^{\text{tred}}\in(g(X_0))K[X_0]=\left(g^{(i)}\right)K[X_0]$. Write
\begin{equation}
F^{\text{tred}}=g^{(i)}R,\label{eq:ftre=gr}
\end{equation}
where
\[
R\in K[X_0].
\]
By \eqref{eq:fcongtotredmodI} and \eqref{eq:bdgmodI1KX}, we have
\begin{equation}
F\equiv g_{s}^{(i)}R_{s}\mod\mathcal I_1K[X].\label{eq:fcongginrimodI1}
\end{equation}
We have
\begin{equation}
\nu_i\left(g^{(i)}(x)\right)=0\label{eq:nuigi>0}
\end{equation}
by definitions. By \eqref{eq:nuifpositive} and Remark \ref{5remarks} (3), we have
\begin{equation}
\nu_i\left(F^{\text{tred}}(x)\right)\ge0.\label{eq:mu0(g)positive}
\end{equation}
From \eqref{eq:ftre=gr}, \eqref{eq:nuigi>0} and \eqref{eq:mu0(g)positive}, we obtain $\nu_i(R(x))\ge0$. Applying Remark  \ref{5remarks} (3) once again, this time to the polynomial $R$, we get
\begin{equation}
\mu_0(R_{s})\ge0.\label{eq:nuir>0}
\end{equation}
Thus $R_{s}\in\VR_K[{\bf X}_{\le i}]\subset\VR_K[\bf X]$.

By Remark \ref{substituteX0}, $g^{(i)}$ is the total reduction of both the polynomials $Q_{i_{\max}i}$ and
$\left(Q_{i_{\max}i}\right)_s$, in particular all three polynomials are congruent to each other and to $g_{ s}^{(i)}$ modulo
$\mathcal I_1$. Since $g_{ s}^{(i)}$ and $(Q_{i_{\max}i})_s$ are neat polynomials of level $s$ congruent to each other mod
$\mathcal I_1K[{\bf X}]$ and involving the variable $X_i$, they are equal by Proposition \ref{neatuniqueness}. Since
$g_{ s}^{(i)}=(Q_{i_{\max}i})_s\in\mathcal I_1+\mathcal I_2$ and in view of \eqref{eq:fcongginrimodI1}, this proves that $F\in\mathcal I_1+\mathcal I_2$, which is what we wanted to show. This completes the proof of Theorem \ref{generatorsCalI}.
\end{proof}

We end the paper with a proposition and some remarks describing certain relations between the generators of
of $\mathcal I_2$, as well as relations between $X_i$ and $X_{i'}$, where $q_i=q_{i'}$. These facts will be used in a forthcoming paper in which we calculate the $\VR_L$-module $\Omega_{\VR_L/\VR_K}$.
\medskip

\noindent{\bf Notation.} For $\ell\in I^*$, define the ideals $\mathcal I_{1,<\ell}\subset\mathcal
I_{1\ell}\subset\mathcal I_1$ and $\mathcal I_{2\ell}\subset\mathcal I_2$ by
\begin{equation}
\mathcal I_{1\ell}=\left(\left.b_{\ell'i}X_{\ell'}-Q_{\ell'i}\ \right|\ i,\ell'\in I^*,\ell'\le\ell,(\ell',i)\text{ is neat}\right),\label{eq:defI1l}
\end{equation}
\begin{equation}
\mathcal I_{1,<\ell}=\left(\left.b_{\ell'i}X_{\ell'}-Q_{\ell'i}\ \right|\ i,\ell'\in I^*,\ell'<\ell,(\ell',i)\text{ is neat}\right),\label{eq:defI1<l}
\end{equation}\begin{equation}
\mathcal I_{2\ell}=\left(\left.Q_{i_{\max}i}\ \right|\ i\in I^*,i\le\ell,Q_i<_{\rm succ}Q_{i_{\max}}\right)\label{eq:defI2l}
\end{equation}
and
\begin{equation}
\mathcal I_{2,<\ell}=\left(\left.Q_{i_{\max}i}\ \right|\ i\in I^*,i<\ell,Q_i<_{\rm succ}Q_{i_{\max}}\right).\label{eq:defI2<l}
\end{equation}
Put
\[
\mathcal I_\ell=\mathcal I_{1\ell}+\mathcal I_{2\ell}\mbox{ and }\mathcal I_{<\ell}=\mathcal I_{1,<\ell}+\mathcal I_{2,<\ell}.
\]

\begin{Obs}\label{5remarks2} With this notation, for an element $F\in K[{\bf X}_{\le i}]$, formula \eqref{eq:bdgmodI1KX} and formula \eqref{eq:bdgmodI1} of Remark \ref{5remarks} (2) can be rewritten as
\begin{equation}
F\equiv F_{s}\mod\,\mathcal I_{1i}K[{\bf X}_{\le i}].\label{eq:bdgmodI1KXi}
\end{equation}
and
\begin{equation}
F\equiv F_{s}\mod\,\mathcal I_{1i},\label{eq:bdgmodI1i}
\end{equation}
respectively.
\end{Obs}
\begin{Obs} By definition, we have $\mathcal I_{2\ell}=\mathcal I_{2,<\ell}=(0)$ whenever $\ell<\ell_w$.
\end{Obs}
The set of generators
\[
\left\{\left.Q_{i_{\text{max}}i}\ \right|\  i\in I^*,Q_i<_{\rm succ}g\right\}
\]
of $\mathcal I_2$ given in \eqref{eq:defI2} is redundant in the sense that
$Q_{i_{\text{max}}i}\in\left(Q_{i_{\text{max}}i'}\right)+\mathcal I_1$ whenever $i'\ge i$. Proving this is the goal of the next proposition.

\begin{Prop}\label{redundantgenerators} Consider indices $i<i'$ in $I^{(w)}$ (so that $Q_i<_{\lim}Q_{i_{\max}}$). We have
\[
Q_{i_{\max}i}\in\left(Q_{i_{\max}i'}\right)+\mathcal I_{1i'}.
\]
\end{Prop}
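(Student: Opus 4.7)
The plan is to apply the total $s'$-building machinery of Section \ref{Characteri1}, with $s':=i'-\ell_w$, to the polynomial $F:=Q_{i_{\max}i}$. Since $F$ lies in $\VR_K[{\bf X}_{\le i}]\subset \VR_K[{\bf X}_{\le i'}]$ and involves only variables $X_k$ of level $k-\ell_{q_k}\le s\le s'$ (where $s:=i-\ell_w$), condition \eqref{eq:iinIqsufflarge} is satisfied with $s$ replaced by $s'$, so the total $s'$-building $F_{s'}$ exists and again lies in $\VR_K[{\bf X}_{\le i'}]$; by Remark \ref{5remarks2} (equation \eqref{eq:bdgmodI1i}) we have $F\equiv F_{s'}\pmod{\mathcal I_{1i'}}$ in $\VR_K[{\bf X}]$. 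The key point is that every $(k,\ell)$-building used in producing $F_{s'}$ involves an index $\ell\le i'$, which is precisely why the congruence takes place modulo $\mathcal I_{1i'}$ rather than the larger $\mathcal I_1$.

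The next step is to identify $F_{s'}$ explicitly. Set $c:=\frac{b_{i_{\max}i}}{b_{i_{\max}i'}}$; by Remark \ref{partialorderingprec}(2), $v(b_{i_{\max}i'})<v(b_{i_{\max}i})$, so $c\in\mathfrak{m}_K\subset\VR_K$. The polynomial $cQ_{i_{\max}i'}$ is neat of level $s'$ and lies in $K[{\bf X}_{\le i'}]\setminus K[{\bf X}_{<i'}]$, since multiplication by a nonzero constant preserves both the neatness conditions and the set of variables appearing. By Remark \ref{substituteX0}, the total reduction of $Q_{i_{\max}i}$ is $b_{i_{\max}i}g(X_0)$, while the total reduction of $cQ_{i_{\max}i'}$ is $c\cdot b_{i_{\max}i'}g(X_0)=b_{i_{\max}i}g(X_0)$. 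Hence by \eqref{eq:fcongtotredmodI}, both $F_{s'}$ and $cQ_{i_{\max}i'}$ are congruent to $Q_{i_{\max}i}$ modulo $\mathcal I_1K[{\bf X}]$, and therefore to each other. Proposition \ref{neatuniqueness} then forces $F_{s'}=cQ_{i_{\max}i'}$. One small point to verify is that $F_{s'}$ genuinely involves $X_{i'}$: its total reduction has $X_0$-degree $\deg_xg=n_{w+1}$, so at least one variable from the last plateau $I^{(w)}$ must appear in the neat level-$s'$ representative, and that variable can only be $X_{i'}$.

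Combining the two steps yields $Q_{i_{\max}i}-cQ_{i_{\max}i'}\in\mathcal I_{1i'}$ as elements of $\VR_K[{\bf X}]$, and since $c\in\VR_K$ this gives $Q_{i_{\max}i}\in (Q_{i_{\max}i'})+\mathcal I_{1i'}$, as required. The main delicate point is keeping the entire argument inside $\VR_K[{\bf X}]$ while simultaneously localizing the ideal to $\mathcal I_{1i'}$; this is why one works with the integral polynomial $Q_{i_{\max}i}$ directly rather than with $\frac{Q_{i_{\max}i}}{b_{i_{\max}i}}$ (which need not be integral), invoking Remarks \ref{5remarks}(2) and \ref{5remarks2} to produce a congruence inside $\VR_K[{\bf X}]$ modulo the finer ideal $\mathcal I_{1i'}$.
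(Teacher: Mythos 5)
Your proof is correct and follows essentially the same route as the paper's: both arguments rest on the identity that the total $s'$-building (with $s'=i'-\ell_w$) carries the relation written in terms of $X_i$ over to the one in terms of $X_{i'}$, together with the valuation inequality $v\left(b_{i_{\max}i'}\right)<v\left(b_{i_{\max}i}\right)$ from Remark \ref{partialorderingprec}(2) to keep the scaling constant $c=b_{i_{\max}i}/b_{i_{\max}i'}$ inside $\VR_K$. The only difference is organizational: you apply the total $s'$-building directly to the integral polynomial $Q_{i_{\max}i}$ and re-derive the identification $F_{s'}=cQ_{i_{\max}i'}$ from Remark \ref{substituteX0} and Proposition \ref{neatuniqueness}, whereas the paper cites the rescaled version of this identity from Remark \ref{partialorderingprec}(1) and then multiplies through by $b_{i_{\max}i}$.
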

\begin{proof} Put  $s:=i'-\ell_w$. Recall that $\left(\frac{Q_{i_{\max}i}}{b_{i_{\max}i}}\right)_{s}$ denotes the total $s$-building of $\frac{Q_{i_{\max}i}}{b_{i_{\max}i}}$. By Remark \ref{5remarks2}, we have
\begin{equation}\label{eq:Qli'QliI1i'K}
\frac{Q_{i_{\max}i'}}{b_{i_{\max}i'}}=\left(\frac{Q_{i_{\max}i}}{b_{i_{\max}i}}\right)_{ s}\equiv
\frac{Q_{i_{\max}i}}{b_{i_{\max}i}}\mod\,\mathcal I_{1i'}K[{\bf X}_{\le i'}].
\end{equation}
By Remark \ref{partialorderingprec} (2), $v\left(b_{i_{\max}i'}\right)<v\left(b_{i_{\max}i}\right)$, so
$\frac{b_{i_{\max}i}}{b_{i_{\max}i'}}Q_{i_{\max}i'}\in\VR_K[{\bf X}_{\le i'}]$ and, by \eqref{eq:bdgmodI1i} and \eqref{eq:Qli'QliI1i'K}, we have
\begin{equation}\label{eq:Qli'QliI1i'}
\frac{b_{i_{\max}i}}{b_{i_{\max}i'}}Q_{i_{\max}i'}\equiv Q_{i_{\max}i}\mod\,\mathcal I_{1i'}.
\end{equation}
The proposition follows immediately from this.
\end{proof}
\begin{Obs}\label{eq:decompositionI*andrelations} Given $q\in\{1,\dots,w\}$ and $i,i'\in I^{(q)}$ with $i<i'$, we have
\[
X_i\in\VR_KX_{i'}+\mathcal I_{1,<\ell_q}.
\]
\end{Obs}

\end{document}